\numberwithin{equation}{section} 
\numberwithin{figure}{section} 
\newtheorem{thm}{Theorem}
\newtheorem{lem}[thm]{Lemma}
\newtheorem{prop}[thm]{Proposition}
\newtheorem{fact}[thm]{Fact}
\theoremstyle{definition}
\newtheorem{defin}[thm]{Definition}
\newcommand{\La}{\mathscr{L}}
\newcommand{\Aut}{\mathop\mathrm{Aut}}
\newcommand{\Orb}{\mathrm{Orb}}
\newcommand{\Inv}{\mathop\mathrm{Inv}}
\newcommand\N{\mathbb{N}}
\let\oldmarginpar\marginpar
\renewcommand{\marginpar}[1]{\-\oldmarginpar[\raggedleft\footnotesize #1]%
{\raggedright\footnotesize #1}}
\begin{document}

\title{Non-permutation invariant Borel quantifiers}

\author{Fredrik Engstr\"om}
\revauthor{Engstr\"om, Fredrik}
\address{Department of Philosophy, Linguistics and Theory of Science \\ University of Gothenburg\\ 
  Box 200, 405 30 G\"oteborg, Sweden}
\email{fredrik.engstrom@gu.se}
\thanks{Part of the work in this paper was done while visiting the Institut Mittag-Leffler. The authors would like thank the Institut Mittag-Leffler for support.}
\thanks{First author partially supported by the EUROCORE LogICCC LINT program and the Swedish Research Council.}

\author{Philipp Schlicht}
\revauthor{Schlicht, Philipp}
\address{Mathematisches Institut \\
Universit\"at Bonn \\
Endenicher Allee 60, 53115 Bonn, Germany }
\email{schlicht@math.uni-bonn.de}
\thanks{Second author partially supported by an exchange grant from the European Science Foundation.}

\begin{abstract} 
Every permutation invariant Borel subset of the space of countable structures is definable in $\La_{\omega_1\omega}$ by a theorem of Lopez-Escobar. We prove variants of this theorem relative to fixed relations and fixed non-permutation invariant quantifiers. Moreover we show that for every closed subgroup $G$ of the symmetric group $S_{\infty}$, there is a closed binary quantifier $Q$ such that the $G$-invariant subsets of the space of countable structures are exactly the $\La_{\omega_1\omega}(Q)$-definable sets. \end{abstract} 

\maketitle

\section{Introduction}

Countable models in a given countable relational signature $\tau$ can be represented as elements of the logic space $$X_{\tau}=\prod_{R\in \tau} 2^{\mathbb{N}^{a(R)}}$$ where $a(R)$ denotes the arity of the relation $R$. For example, the set of elements of the logic space for a binary relation representing linear orders is a closed set. 

The Lopez-Escobar theorem is an easy consequence due to Scott of the interpolation theorem \cite{Lopez-Escobar:65} for $\La_{\omega_1\omega}$. The interpolation theorem states that if $\varphi$ is an $\La_{\omega_1\omega}$-formula in the signature $\sigma$ and $\psi$ is an $\La_{\omega_1\omega}$-formula in the signature $\tau$ such that $\varphi\rightarrow\psi$ holds in all countable models, then there is an  $\La_{\omega_1\omega}$ interpolant $\theta$ in the signature $\sigma\cap\tau$ such that $\varphi\rightarrow\theta$ and $\theta\rightarrow\psi$ hold in all countable models. 

The Lopez-Escobar theorem \cite[theorem 16.8]{Kechris:95} states that any invariant Borel subset of the logic space is defined by a formula in $\La_{\omega_{1}\omega}$. To derive this from the interpolation theorem, note that every Borel set is defined by an $\La_{\omega_{1}\omega}$-formula in a sequence of parameters $n_i\in\mathbb{N}$. If you replace each $n_i$ by a constant $c_i$ or $d_i$ and use the fact that the set is permutation invariant, it follows that there is an $\La_{\omega_1\omega}$ interpolant without parameters or constants defining the set. Vaught \cite{Vaught:74} found a different proof which has the advantage that it generalizes to the logic space for structures of higher cardinalities. We will generalize Vaught's proof to sets of countable structures invariant under the action of a closed subgroup of the permutation group of the natural numbers. 

Let $G$ be the group of permutations fixing a countable family of relations and constants in the natural numbers. In section 2 we show that every $G$-invariant set is definable from these relations and constants. 

A generalized quantifier of type $\langle k\rangle$ on the natural numbers is a subset of $2^{\mathbb{N}^k}$. We will freely identify subsets of $\mathbb{N}^k$ and their characteristic functions. We consider the logic $\La_{\omega_1\omega}(Q)$. This is $\La_{\omega_1\omega}$ augmented by the quantifier $Q$ where the formula $Qx\varphi(x)$ has the fixed interpretation $\{x\in\mathbb{N}^k:\varphi(x)\}\in Q$. We study non-permutation invariant generalized quantifiers on the natural numbers and prove a variant of the Lopez-Escobar theorem for a subclass of the quantifiers which are closed and downwards closed. 

Moreover for every closed subgroup $G$ of the symmetric group $S_{\infty}$, there is a closed binary quantifier $Q$ such that the $G$-invariant subsets of the space of countable structures are exactly the $\La_{\omega_1\omega}(Q)$-definable sets. 

In section 3 we show that there is a version of the Lopez-Escobar theorem for clopen quantifiers and for finite boolean combinations of principal quantifiers. In section 4 we generalize some of the results to the logic space for structures of size $\kappa$ for uncountable cardinals $\kappa$ with $\kappa^{<\kappa}=\kappa$.

\section{Variants of the Lopez-Escobar theorem}

Let 
$$X_\tau = \prod_{R \in \tau} 2^{\N^{a(R)}}$$ 
denote the logic space on $\N$ for a relational signature $\tau$, where $a(R)$ is the arity of the relation $R$. The space is equipped with the product topology. If $\mathcal{F}$ is a sequence of relations on $\mathbb{N}$, then the logic $\La_{\omega_1 \omega}(\mathcal{F})$ has a symbol for each relation in $\mathcal{F}$ with fixed interpretation as this relation.

\subsection{Variants relative to relations} 

We prove a version of the Lopez-Escobar theorem for closed subgroups of the permutation group $S_{\infty}$ of the natural numbers. Recall the standard 

\begin{fact} The closed subgroups of $S_{\infty}$ are exactly the automorphism groups of countable relational structures. 
\end{fact}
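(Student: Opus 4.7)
The plan is to prove both inclusions. One direction is immediate: for any countable relational structure $\mathcal{M}$ with domain $\N$, the set $\Aut(\mathcal{M}) \subseteq S_\infty$ is a subgroup, and preservation of each relation $R \in \tau$ of arity $a(R)$ is a closed condition in the pointwise convergence topology, since it can be written as an intersection, over all tuples $\bar x \in \N^{a(R)}$, of the clopen conditions $R(\bar x) \leftrightarrow R(\sigma(\bar x))$.

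For the harder direction, given a closed subgroup $G \leq S_\infty$, I would build the \emph{canonical structure} $\mathcal{M}_G$ on $\N$ whose relations are precisely the orbits of the diagonal action of $G$ on $\N^n$ for every $n \geq 1$. The signature is countable because $\N^n$ is countable, so for each $n$ there are at most countably many orbits and altogether countably many relations. By construction, every element of $G$ preserves every orbit, hence $G \subseteq \Aut(\mathcal{M}_G)$.

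The main work is the reverse inclusion $\Aut(\mathcal{M}_G) \subseteq G$. Here I would use that $G$ is closed: it suffices to show that any $\sigma \in \Aut(\mathcal{M}_G)$ lies in the closure of $G$. A basic open neighborhood of $\sigma$ in $S_\infty$ is determined by a finite set $F \subseteq \N$ (enlarging $F$ to also cover finitely many inverse-conditions if needed, using $F' = F \cup \sigma^{-1}(F)$). Enumerate $F$ as a tuple $\bar a$. Since $\sigma$ is an automorphism of $\mathcal{M}_G$, the tuples $\bar a$ and $\sigma(\bar a)$ realize the same orbit-relation, so they lie in the same $G$-orbit, and some $g \in G$ satisfies $g(\bar a) = \sigma(\bar a)$. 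Then $g$ agrees with $\sigma$ on $F$ and hence lies in the chosen neighborhood of $\sigma$.

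The only genuine delicate point is matching the topology of $S_\infty$ — which is generated by conditions on both $\sigma$ and $\sigma^{-1}$ on finite sets — with the fact that an orbit equivalence only gives us agreement on inputs. This is handled by the $F' = F \cup \sigma^{-1}(F)$ trick above, after which the argument reduces to the combinatorial fact that $G$-orbits in $\N^n$ are precisely the sets of $n$-tuples agreeing with a given one under some element of $G$. The rest is bookkeeping.
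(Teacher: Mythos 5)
Your proof is correct: the forward direction (automorphism groups are intersections of clopen conditions, hence closed) and the converse via the canonical orbit structure $\mathcal{M}_G$, using closedness of $G$ to get $\Aut(\mathcal{M}_G)\subseteq G$ from agreement on finite tuples, is exactly the standard argument. The paper itself does not prove this Fact but cites Gao's Theorem 2.4.4, which proceeds in essentially the same way, so your approach matches the cited proof (and your worry about inverse conditions is harmless, since the subspace topology on $S_\infty$ already makes inversion continuous).
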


See for example \cite[theorem 2.4.4]{Gao:09} for a proof. 

\begin{defin} Suppose $G\leq S_{\infty}$ is a subgroup. The $G$-orbit of $a\in \N^{<\omega}$ is defined as $\Orb_G(a)=\{g(a):g\in G\}$. 
\end{defin} 

When $G$ is understood from the context we write $\Orb(a)$ for the $G$-orbit of $a$.

\begin{prop} 
Suppose $G\leq S_{\infty}$ is closed and $\mathcal{F}$ is the family of orbits of $G$. Then every $G$-invariant Borel subset of $X_{\tau}$ is definable in $\La_{\omega_{1}\omega}(\mathcal{F})$. 
\end{prop}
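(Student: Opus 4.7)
\noindent
The plan is to adapt Vaught's proof of the classical Lopez-Escobar theorem, with the closed subgroup $G$ and the orbit family $\mathcal{F}$ playing the role that $S_\infty$ and equality-type predicates play in the standard argument. The central observation is that although a basic clopen set $U_{\bar{c},R,\epsilon} := \{\mathcal{M} : R^\mathcal{M}(\bar{c}) = \epsilon\}$ is not $G$-invariant, its $G$-saturation equals $\bigcup_{\bar{d}\in\Orb_G(\bar{c})} U_{\bar{d},R,\epsilon}$ and is defined in $\La_{\omega_1\omega}(\mathcal{F})$ by $\exists \bar{x}\bigl(P_{\Orb_G(\bar{c})}(\bar{x}) \wedge R^\epsilon(\bar{x})\bigr)$, where $P_{\Orb_G(\bar{c})}$ denotes the symbol for the orbit $\Orb_G(\bar{c}) \in \mathcal{F}$.

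First I would establish, by transfinite induction on Borel rank, that every Borel subset $B$ of $X_\tau$ is defined by an $\La_{\omega_1\omega}(\tau)$-formula $\varphi_B(\bar{c})$ with constants for some tuple $\bar{a}$ of natural numbers: atomic formulas handle basic clopen sets, $\bigvee$ handles countable unions, and $\neg$ handles complements. Next, assuming $B$ is $G$-invariant, I would replace those constants by fresh variables to form $\psi_B(\bar{x}) \in \La_{\omega_1\omega}(\tau)$ and set
\[
\chi_B \;:=\; \exists \bar{x}\bigl(P_{\Orb_G(\bar{a})}(\bar{x}) \wedge \psi_B(\bar{x})\bigr)
\]
in $\La_{\omega_1\omega}(\mathcal{F})$. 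The forward direction $\mathcal{M} \in B \Rightarrow \mathcal{M} \models \chi_B$ is witnessed by $\bar{x} = \bar{a}$. For the converse, any witness $\bar{b} \in \Orb_G(\bar{a})$ admits some $g \in G$ with $g(\bar{b}) = \bar{a}$; since $\psi_B$ has no symbols from $\mathcal{F}$, this transports $\mathcal{M} \models \psi_B(\bar{b})$ to $g \cdot \mathcal{M} \models \psi_B(\bar{a})$, hence $g \cdot \mathcal{M} \in B$, and the $G$-invariance of $B$ forces $\mathcal{M} \in B$.

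The main obstacle is that the parameter tuple $\bar{a}$ produced by the induction is in general countably infinite, whereas standard $\La_{\omega_1\omega}$ admits only finite blocks of existential quantifiers. To overcome this, I would strengthen the induction so that the translation into $\La_{\omega_1\omega}(\mathcal{F})$ is performed locally at each step of the Borel hierarchy: each atomic condition $R(\bar{c})$ is immediately absorbed by an existential quantifier over the finite-tuple orbit predicate $P_{\Orb_G(\bar{c})}\in\mathcal{F}$, and the countable Boolean combinations are then formed inside $\La_{\omega_1\omega}(\mathcal{F})$. The delicate point is justifying these local replacements using the $G$-invariance of the top-level set $B$, and here the closedness of $G$ is essential: it ensures that the $G$-orbit of a countable tuple is determined by the orbits of its finite sub-tuples, so that coordinating finite pieces of parameter information suffices. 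Orchestrating this bookkeeping through the Borel hierarchy is where the substantive technical work lies.
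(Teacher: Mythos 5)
Your first paragraph is fine as far as it goes: if a $G$-invariant Borel set happens to be defined by an $\La_{\omega_1\omega}$-formula with a \emph{finite} tuple $\bar a$ of natural-number parameters, then existentially quantifying over the orbit predicate $P_{\Orb(\bar a)}$ does define it in $\La_{\omega_1\omega}(\mathcal{F})$, by exactly the transport argument you give. But the obstacle you then name --- a general Borel set needs countably many parameters --- is the whole content of the theorem, and your proposed fix does not close it. Saturating each atomic clopen piece by its orbit and then forming the countable Boolean combinations inside $\La_{\omega_1\omega}(\mathcal{F})$ does not return the original set: $G$-saturation commutes with unions but not with complements or countable intersections, and the intermediate sets appearing in the Borel decomposition of $B$ are not themselves $G$-invariant, so the invariance of the top-level set cannot be invoked ``locally'' at each stage. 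Moreover the principle you lean on --- that for closed $G$ the orbit of a countably infinite tuple is determined by the orbits of its finite subtuples --- is false even for $G=S_\infty$: every finite initial segment of the tuple $\langle 0,1,2,\ldots\rangle$ is mapped by some permutation onto the corresponding initial segment of $\langle 0,2,4,\ldots\rangle$, yet no permutation maps the first infinite tuple onto the second, since its image would have to be coinfinite. So the ``bookkeeping'' you defer is not bookkeeping; the argument as outlined breaks at exactly that point.

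The missing idea is Vaught's category quantifier, and that is what the paper uses. Following Vaught and the proof of Theorem 16.8 in Kechris, one shows by induction on Borel rank that for every Borel $A\subseteq X_\tau$ and every $k$ there is an $\La_{\omega_1\omega}(\mathcal{F})$-formula $\varphi_k(x,a)$ which holds of $\langle x,a\rangle\in X_\tau\times\N^k$ exactly when $g(x)\in A$ for comeager many $g\in G$ with $a\subseteq g^{-1}$; here the set of injections $k\to\N$ of the classical proof is replaced by the orbit of $\langle 0,\ldots,k-1\rangle$, which is where the predicates from $\mathcal{F}$ enter. Unlike pointwise saturation, this ``comeager many $g$'' transform behaves correctly under countable unions, countable intersections and complements, via the Baire category theorem for $G$ --- and this, not any finite-subtuple determination of infinite orbits, is where closedness of $G$ is used (a closed subgroup of $S_\infty$ is Polish). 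For $G$-invariant $A$ the transform at $k=0$ recovers $A$ itself, giving the defining sentence. Replacing your local-saturation scheme by this inductive invariant is not a refinement of your plan but a genuinely different mechanism, and without it the proof does not go through.
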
 

\begin{proof} 
The proof is very similar to Vaught's proof  \cite{Vaught:74}. We follow the proof of \cite[theorem 16.8]{Kechris:95} and replace the set of injections $k\rightarrow \mathbb{N}$ with the orbit of $\langle0,1,..,k-1\rangle$. Note that the Baire category theorem for $G$ holds since $G$ is closed in $S_{\infty}$. By induction on the Borel rank, there is for every Borel set $A\subseteq X_{\tau}$ and every $k\in \N$ an $\La_{\omega_{1}\omega}(\mathcal {F})$-formula $\varphi_k$ such that $\varphi_k(x,a)$ holds for  $\langle x,a\rangle\in X_{\tau}\times \N^k$ if and only if $g(x)\in A$ for comeager many $g\in G$ with $a\subseteq g^{-1}$. 
\end{proof} 

Note that every $G_{\delta}$ subgroup of $S_{\infty}$ is closed \cite[proposition 1.2.1]{Becker.Kechris:96}. However, Proposition 3 is false for some $F_\sigma$ subgroups. We write $A=^*B$ if $A\triangle B$ is finite and $A\subseteq^* B$ if $A-B$ is finite. Suppose $A\subseteq \N$ is infinite and coinfinite and let $G=\set{g \in S_\infty: g(A)=^*A}$. Then $G$ is $F_{\sigma}$ and has the same orbits as $S_\infty$. However, the set $Q=\{X:A\subseteq^*X\}$ is $G$-invariant, but not $S_{\infty}$-invariant and hence not definable from the orbits of $G$. 

\begin{defin} Suppose $G\leq S_{\infty}$. The orbit equivalence relation on $\N^{<\omega}$ is defined as $E_G=\{\langle a,b\rangle\in\N^{<\omega}\times \N^{<\omega}: \exists g\in G (g(a)=b)\}$. 
\end{defin} 

The orbit equivalence relation may contain much less information than the family of orbits. For example, $E_{\{id_{\mathbb{N}}\}}$ is definable in $\La_{\omega_1\omega}$. Hence none of the orbits of $\{id_{\mathbb{N}}\}$ is definable from $E_{\{id_{\mathbb{N}}\}}$. 

As a corollary to lemma 3 we obtain a variant of Scott sentences for countable structures. 

\begin{prop} 
Suppose $G\leq S_{\infty}$ is closed and $\mathcal{F}$ is the family of $G$-orbits. There is for each $M \in X_\tau$ an $\La_{\omega_1\omega}(\mathcal{F})$-sentence $\varphi^G_M$ with $M\vDash\varphi^G_M$ and the property that $\varphi^G_M=\varphi^G_N$ if and only of there is $g\in G$ with $g(M)=N$. 
\end{prop}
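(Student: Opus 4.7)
The plan is to reduce this to Proposition 3 via the observation that each $G$-orbit in $X_\tau$ is Borel. For any $M \in X_\tau$, the $G$-orbit $[M]_G := \{g(M) : g \in G\}$ is $G$-invariant by construction, so once Borelness is established, Proposition 3 supplies an $\La_{\omega_1\omega}(\mathcal{F})$-sentence defining $[M]_G$.

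First I would establish that $[M]_G$ is Borel. Since $G$ is a closed subgroup of $S_\infty$ it is a Polish group, and it acts continuously on the Polish space $X_\tau$; by the classical theorem (due to Sami, building on Vaught) that every orbit of a Polish group action on a Polish space is Borel, $[M]_G$ is Borel in $X_\tau$. Alternatively, one can obtain this more concretely by carrying out a Scott-style analysis with $G$ in place of $S_\infty$, relying on the Baire category theorem for $G$ in the same way as in the proof of Proposition 3, and checking that the resulting invariants separate $G$-orbits at some countable level.

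Second, for each $G$-orbit $O \subseteq X_\tau$ I would fix once and for all a single $\La_{\omega_1\omega}(\mathcal{F})$-sentence $\varphi_O$ defining $O$, using Proposition 3 together with a choice of representative per orbit (for instance via a wellordering of $\La_{\omega_1\omega}(\mathcal{F})$-sentences). Setting $\varphi^G_M := \varphi_{[M]_G}$, the remaining verification is immediate: $M \models \varphi^G_M$ because $M \in [M]_G$; and $\varphi^G_M = \varphi^G_N$ iff $[M]_G = [N]_G$, which is by definition the statement that there exists $g \in G$ with $g(M) = N$.

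The main obstacle is the Borelness of each orbit; once it is in hand, the rest is a formal consequence of Proposition 3 and the choice of one defining sentence per orbit.
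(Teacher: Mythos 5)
Your proposal is correct and follows essentially the same route as the paper: the paper's proof consists precisely of noting that the orbit $\Orb(M)=\{g(M):g\in G\}$ is Borel (citing the theorem that orbits of Polish group actions are Borel) and then invoking the preceding proposition to get a defining $\La_{\omega_1\omega}(\mathcal{F})$-sentence. Your additional care in fixing one canonical sentence per orbit, so that equal orbits yield literally equal sentences, is a detail the paper leaves implicit but is the same argument.
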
 

\begin{proof} 
The orbit $\Orb(M)=\{g(M):g\in G\}$ is Borel \cite[Theorem 3.3.2]{Gao:09}. 
\end{proof} 

When $G$ is the symmetric group we use the standard notation $\varphi_M=\varphi^{S_\infty}_M$. We give a version of the Lopez-Escobar theorem relative to a family of relations. 

\begin{prop} Suppose $\mathcal{F}=\langle R_i:i<\omega\rangle$ is a family of relations on $\mathbb{N}$ in the signature $\tau$. Then every $\Aut(\mathcal{F})$-invariant Borel subset of $X_\tau$ is definable in $\La_{\omega_1\omega}(\mathcal{F})$.
\end{prop}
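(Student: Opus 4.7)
The plan is to reduce to Proposition 3 by showing that each $\Aut(\mathcal{F})$-orbit on $\N^{<\omega}$ is $\La_{\omega_1\omega}(\mathcal{F})$-definable, and then substituting these definitions into the formula that Proposition 3 provides.

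First I would check that $G=\Aut(\mathcal{F})$ is a closed subgroup of $S_\infty$: for each $R_i\in\mathcal{F}$ the condition that $g$ preserves $R_i$ is closed in the topology of pointwise convergence on $S_\infty$, so their intersection is closed as well. Proposition 3 then applies and yields, for every $\Aut(\mathcal{F})$-invariant Borel set $A\subseteq X_\tau$, an $\La_{\omega_1\omega}(\mathcal{G})$-formula $\Phi_A$ defining $A$, where $\mathcal{G}=\{\Orb_G(a):a\in\N^{<\omega}\}$ is the family of $G$-orbits.

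Second, I would invoke Scott's theorem applied to the countable relational structure $\mathfrak{M}=(\N,\mathcal{F})$, whose automorphism group is precisely $G$. The usual back-and-forth analysis stratified by Scott rank produces, for every $a\in\N^{<\omega}$, an $\La_{\omega_1\omega}$-formula $\psi_a(\bar x)$ in the language of $\mathfrak{M}$, hence in $\La_{\omega_1\omega}(\mathcal{F})$, that defines the orbit of $a$ under $\Aut(\mathfrak{M})=G$ as a subset of $\N^{|a|}$.

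Finally, substituting each atomic occurrence of an orbit predicate $\Orb_G(a)$ appearing in $\Phi_A$ by its $\La_{\omega_1\omega}(\mathcal{F})$-definition $\psi_a$ yields the desired $\La_{\omega_1\omega}(\mathcal{F})$-formula for $A$. The main work lies in the second step, the appeal to Scott's theorem to define each orbit from the fixed relations $\mathcal{F}$; the first step is routine topology and the last step is a direct formal substitution.
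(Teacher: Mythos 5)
Your proposal is correct and follows essentially the same route as the paper: note that $\Aut(\mathcal{F})$ is closed, reduce to Proposition 3 by showing each orbit is $\La_{\omega_1\omega}(\mathcal{F})$-definable, and obtain those definitions from Scott's theorem (the paper uses the Scott sentence of the expanded structure $\langle\mathbb{N},\mathcal{F},a\rangle$, which is just your Scott formula $\psi_a$ with the tuple named by constants). No gaps worth flagging.
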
 

\begin{proof} 

It is sufficient to show that each orbit of $\Aut(\mathcal{F})$ in $\mathbb{N}^{<\omega}$ is definable in $\La_{\omega_1\omega}(\mathcal{F})$. Then all $\Aut(\mathcal{F})$-invariant Borel sets are definable in $\La_{\omega_1\omega}(\mathcal{F})$ by Proposition 3. Note that $\Aut(\mathcal{F})$ is closed. 

Let $M_a=\langle\mathbb{N},\langle R_i:i<\omega\rangle,a\rangle$ for $a\in\mathbb{N}^{<\omega}$. Then $a\in \Orb(b)$ if and only if the structures $M_a$ and $M_b$ are isomorphic via a permutation in $G$ if and only if $M_b\vDash \varphi_{M_a}$. Hence $\varphi_{M_a}$ defines the orbit of $a$. 
\end{proof}

\subsection{Variants relative to quantifiers} 

Let $Q$ be a quantifier on $\mathbb{N}^k$, i.e. a quantifier of type $\langle k\rangle$ on the natural numbers. We have to look at two kinds of definability. 

\begin{defin} A set $A\subseteq X_{\tau}$ is definable in $\La_{\omega_1\omega}(Q)$ if there is an $\La_{\omega_1\omega}(Q)$-formula $\varphi$ such that $M\in A$ if and only if $M \vDash \varphi$ for all $M\in X_\tau$. 
\end{defin} 

\begin{defin} A set $A\subseteq\mathbb{N}^k$ is definable in $\La_{\omega_1\omega}(Q)$ if there is an $\La_{\omega_1\omega}(Q)$-formula $\varphi$ such that $n\in A$ if and only if $\N \vDash\varphi(n)$. 
\end{defin} 

We say that a permutation $f$ fixes $Q$ if $A\in Q$ is equivalent to $f(A)\in Q$ for all $A\subseteq \N^k$. $\Aut(Q)$ is the group of permutations fixing $Q$. A set $A\subseteq X_{\tau}$ is called $G$-invariant for $G\leq S_{\infty}$ if $g(A)=A$ for all $g\in G$.  

Proposition 3 implies 

\begin{prop} 
Suppose $Q\subseteq 2^{\mathbb{N}^k}$ is a Borel quantifier with closed automorphism group. Suppose the orbits of $\Aut(Q)$ are definable in $\La_{\omega_{1}\omega}(Q)$. Then a subset of $X_{\tau}$ is Borel and $\Aut(Q)$-invariant if and only if it is definable in $\La_{\omega_{1}\omega}(Q)$.
\end{prop}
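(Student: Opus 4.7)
The plan is to prove the two implications separately: the Borel-and-invariant $\Rightarrow$ definable direction is essentially a corollary of Proposition 3, while the converse is a routine induction on formula complexity.

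For the first direction, set $G=\Aut(Q)$ and let $\mathcal F$ be the family of $G$-orbits on $\N^{<\omega}$. Since $G$ is closed by hypothesis, Proposition 3 yields an $\La_{\omega_1\omega}(\mathcal F)$-formula $\varphi$ defining $A$. The hypothesis that each orbit $O\subseteq\N^n$ of $\Aut(Q)$ is $\La_{\omega_1\omega}(Q)$-definable then lets me substitute, for every occurrence of an orbit symbol $R_O$ in $\varphi$, its $\La_{\omega_1\omega}(Q)$-definition $\psi_O$. The substitution preserves truth because orbits are fixed relations on $\N$: $\psi_O(b)$ holds in any $M\in X_\tau$ iff $b\in O$ iff $R_O(b)$ holds in $M$. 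The resulting $\La_{\omega_1\omega}(Q)$-formula thus defines $A$.

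For the converse I would induct on $\La_{\omega_1\omega}(Q)$-formulas, proving the strengthened joint claim that for every $\varphi(y_1,\dots,y_n)$: (i) $\{M\in X_\tau:M\vDash\varphi(b)\}$ is Borel for each $b\in\N^n$, and (ii) $M\vDash\varphi(b)\iff g(M)\vDash\varphi(g(b))$ for all $M$, $b$, and $g\in\Aut(Q)$. Atomic formulas produce clopen sets with the evident equivariance, and the $\La_{\omega_1\omega}$ connectives and first-order quantifiers preserve both properties by standard arguments. The new case is $\psi(y)=Qx\,\varphi(x,y)$: consider the map $f_b:X_\tau\to 2^{\N^k}$ sending $M$ to $\{a\in\N^k:M\vDash\varphi(a,b)\}$. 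By (i) this map is Borel in each coordinate and hence Borel into the product space, so $\{M:M\vDash\psi(b)\}=f_b^{-1}(Q)$ is Borel because $Q$ is Borel. By (ii), $f_{g(b)}(g(M))=g(f_b(M))$, and this set lies in $Q$ iff $f_b(M)$ does, by definition of $\Aut(Q)$. Specializing $\varphi$ to a sentence gives the desired $\Aut(Q)$-invariance of $A$.

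I expect the main obstacle to be the inductive $Q$-step, where Borelness and equivariance must be propagated simultaneously and where both halves of the hypothesis on $Q$ --- that $Q$ is Borel in $2^{\N^k}$ and that $\Aut(Q)$ respects it --- are each used in turn. Apart from this, the argument is routine Vaught-style bookkeeping chained onto Proposition 3.
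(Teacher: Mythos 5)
Your proposal is correct and takes essentially the same approach as the paper: the forward direction is Proposition 3 applied to the closed group $\Aut(Q)$ followed by substitution of the $\La_{\omega_1\omega}(Q)$-definitions of the orbits, and the converse is the same induction on formulas, where the paper handles the $Q$-step by writing $Q$ as an $\La_{\omega_1\omega}$-formula with natural-number parameters while you take the preimage of the Borel set $Q$ under the Borel map $M\mapsto\{a\in\N^k: M\vDash\varphi(a,b)\}$ --- the same argument in topological rather than syntactic form. You additionally spell out the $\Aut(Q)$-invariance of definable sets, which the paper leaves implicit.
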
 

\begin{proof} 
We are left to show that every $\La_{\omega_{1}\omega}(Q)$-definable set $A\subseteq X_{\tau}$ is Borel. Note that the Borel sets are exactly the sets definable in $\La_{\omega_1\omega}$ in a sequence of parameters $m_i\in\mathbb{N}$. Suppose $Q$ is defined by an $\La_{\omega_{1}\omega}$-formula $\varphi$ in the parameters $m_i$.   

We want to show by induction on $\psi$ that if $A$ is defined by the $\La_{\omega_{1}\omega}(Q)$-formula $\psi(x,\vec{n})$ with $\vec{n}=\langle n_i:i<\omega\rangle$, then $\psi$ is equivalent to an $\La_{\omega_{1}\omega}$-formula in a sequence of natural parameters. Let $\psi=Qx\chi(x,\vec{n})$ where $\chi$ is an $\La_{\omega_{1}\omega}$-formula. 

Suppose $M$ is a countable structure in the signature $\tau$. Then $M\in A$ if and only if $\set{x:M\vDash \chi(x,\vec{n}}\in Q$ if and only if $\langle\mathbb{N}^k, \set{x:M\vDash\chi(x,\vec{n})}\rangle\vDash\varphi$. Hence $A$ is Borel. 
\end{proof} 

The assumption that $\Aut(Q)$ is closed is essential. We write $A=^*B$ if $A\triangle B$ is finite. Let $Q=\{X:X=^*A\}$ where $A\subseteq \mathbb{N}$ is infinite and coinfinite. It follows by induction on $\varphi$ that any  $\La_{\omega_1\omega}(Q)$-formula of the form $Qx\varphi(x,a)$ is false in $\langle\mathbb{N},X\rangle$ for all $X\neq^*A,\neg A$. If $\langle\N,X\rangle\vDash\varphi$ for some infinite and co-infinite $X\neq^*A,\neg A$ where $\varphi$ is an $\La_{\omega_1\omega}(Q)$-formula, this implies that $\varphi$ holds in every structure $\langle\N,Y\rangle$ with $Y\neq^*A,\neg A$ infinite and co-infinite. Hence the set $\{X:A\subseteq^*X\}$ is $\Aut(Q)$-invariant but not  $\La_{\omega_1\omega}(Q)$-definable. 

\begin{defin} Suppose $Q\subseteq 2^{\mathbb{N}^k}$ is closed and downwards closed, i.e. closed under subsets. A function $p:n\rightarrow\mathbb{N}$ is compatible with $Q$ if and only if for every $A\subseteq n^k$, $A\in Q$ if and only if $p(A)\in Q$. 
\end{defin} 

Note that for downwards closed $Q$ and $A\subseteq n^k$, $A\in Q$ if and only if $A$ extends to an element of $Q$, i.e. if there is $B\subseteq \mathbb{N}^k$ in $Q$ with $A=B\cap n^k$. Hence $p:n\rightarrow\mathbb{N}$ is compatible with $Q$ if and only if for every function $g:n^k\rightarrow \{0,1\}$, $g$ extends to an element of $Q$ if and only if $p(g)=g\circ p^{-1}$ extends to an element of $Q$.

\begin{defin} A quantifier $Q$ is good if it is closed, downwards closed, and any finite injection $p:n\rightarrow\mathbb{N}$ compatible with $Q$ extends to a permutation $f:\mathbb{N}\rightarrow\mathbb{N}$ leaving $Q$ invariant.
\end{defin} 

The information about tuples of natural numbers encoded in a good quantifier $Q$ is definable in $\La_{\omega\omega}(Q)$.

\begin{prop} Suppose $Q$ is good. Then the orbits of $\Aut(Q)$ are definable in $\La_{\omega\omega}(Q)$.
\end{prop}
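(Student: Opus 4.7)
The plan is to express each orbit $\Orb_{\Aut(Q)}(a)$ as an existential projection of the orbit of the canonical initial segment $(0,1,\dots,M-1)$, using goodness to give a first-order description of that canonical orbit. Fix an $n$-tuple $a=(a_0,\dots,a_{n-1})\in \N^n$ and any integer $M>\max_{j<n} a_j$, so that every entry of $a$ lies in $\{0,\dots,M-1\}$. For each subset $A\subseteq \{0,\dots,M-1\}^k$, write $\chi_A(\vec z, y_0,\dots,y_{M-1})$ for the disjunction $\bigvee_{(i_1,\dots,i_k)\in A}\bigwedge_{j=1}^k z_j=y_{i_j}$, and define
\[
\psi_M(y_0,\dots,y_{M-1}) \;=\; \bigwedge_{i<j<M} y_i\neq y_j \;\wedge\; \bigwedge_{A\subseteq \{0,\dots,M-1\}^k}\bigl(Q\vec z\,\chi_A(\vec z,y)\leftrightarrow [A\in Q]\bigr),
\]
where $[A\in Q]$ is $\top$ if $A$, viewed as a subset of $\N^k$, lies in $Q$, and $\bot$ otherwise. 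Since the outer conjunction is over a finite index set, $\psi_M$ is a first-order formula of $\La_{\omega\omega}(Q)$.

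The central claim is that $\psi_M$ defines $\Orb_{\Aut(Q)}((0,1,\dots,M-1))$ inside $\N^M$. One inclusion is routine since $\Aut(Q)$ preserves the $Q$-diagram of any tuple. For the reverse, if $(y_0,\dots,y_{M-1})$ satisfies $\psi_M$, then by construction the injection $i\mapsto y_i$ from $\{0,\dots,M-1\}$ to $\N$ is compatible with $Q$ in the sense of the definition of compatible; goodness then produces an extension of this injection to a permutation of $\N$ leaving $Q$ invariant, placing $y$ in $\Orb_{\Aut(Q)}((0,1,\dots,M-1))$. This is the only nontrivial step, and it is precisely where goodness is invoked.

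Given the claim, set
\[
\varphi_a(x_0,\dots,x_{n-1}) \;=\; \exists y_0\cdots y_{M-1}\Bigl(\psi_M(y_0,\dots,y_{M-1}) \;\wedge\; \bigwedge_{j<n} y_{a_j}=x_j\Bigr).
\]
A witness $y$ corresponds to some $g\in\Aut(Q)$ with $g(i)=y_i$ for $i<M$, and the remaining conjuncts say $g(a_j)=x_j$ for every $j<n$; conversely every such $g$ yields a witness by setting $y_i=g(i)$. Hence $\varphi_a$ defines $\Orb_{\Aut(Q)}(a)$ in $\La_{\omega\omega}(Q)$. Repeated entries in $a$ are handled automatically, since $a_j=a_{j'}$ forces $y_{a_j}=y_{a_{j'}}$ and hence $x_j=x_{j'}$, reproducing the equality pattern of $a$.
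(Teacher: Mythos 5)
Your proof is correct and takes essentially the same approach as the paper: your $\psi_M$ is precisely the paper's conjunction of positive and negative $Q$-statements expressing that the map $i\mapsto y_i$ is compatible with $Q$, and goodness is invoked in exactly the same way to extend a compatible finite injection to an automorphism. Your two additions --- the explicit injectivity conjunct and the existential projection reducing the orbit of an arbitrary (possibly non-injective) tuple to that of $(0,\dots,M-1)$ --- are harmless refinements of the paper's argument, which only treats the initial-segment tuples.
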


\begin{proof} $a=\langle a_i:i<n\rangle$ is in the orbit of $\langle0,..,n-1\rangle$ if and only if the map $n\rightarrow\mathbb{N}$ mapping $i$ to $a_i$ is compatible with $Q$. This is expressible by the conjunction of $Qx\bigvee_{i\in I}x=\langle a_{i(0)},..,a_{i(k-1)}\rangle$ for all $I\subseteq n^k$ with $I\in Q$ together with the conjunction of $\neg Qx\bigvee_{i\in I}x=\langle a_{i(0)},..,a_{i(k-1)}\rangle$ for all $I\subseteq n^k$ with $I\notin Q$. 
\end{proof} 

Note that the automorphism group of a closed quantifier is closed. 

\begin{prop} 
Suppose $G$ is a closed subgroup of $S_{\infty}$. There is a good binary quantifier $Q_G$ with $G=\Aut(Q_G)$. 
\end{prop}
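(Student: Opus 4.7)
My plan is to construct the binary quantifier $Q_G$ as the closed, downwards closed hull of the family of $G$-translates of finite subsets of a suitably chosen rigid template $E \subseteq \N^2$.

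By Fact 2, fix a countable relational structure $\mathbb{M}$ on $\N$ with $\Aut(\mathbb{M}) = G$. Using a pairing function on $\N$, encode the relations of $\mathbb{M}$ into a single binary relation $E \subseteq \N^2$ on disjoint ``tracks,'' and adjoin rigidity markers (for instance the self-loop $(0,0)$ together with the successor edges $(i, i+1)$) designed to rule out extraneous symmetries of $E$ in $S_\infty$. Then set
\[
Q_G := \{A \subseteq \N^2 : \text{for every finite } F \subseteq A \text{ there is } g \in G \text{ with } g(F) \subseteq E\}.
\]
By construction $Q_G$ is closed in $2^{\N^2}$ (membership is witnessed by finite subsets) and downwards closed. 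The inclusion $G \le \Aut(Q_G)$ is immediate: for $h \in G$, if $g(h^{-1}F) \subseteq E$ witnesses $h^{-1}F \subseteq A$, then $(gh^{-1})F \subseteq E$ witnesses $F$, with $gh^{-1} \in G$.

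For $\Aut(Q_G) \le G$, observe that $E \in Q_G$ with $g = \mathrm{id}$, so if $f \in \Aut(Q_G)$ then $f \cdot E \in Q_G$. For each $n$, apply the definition to the finite set $f \cdot F_n$, where $F_n$ is the initial segment of $E$ on $\{0,1,\dots,n\}$; the rigidity markers in $E$ force any witness $g_n \in G$ to satisfy $g_n f|_{\{0,\dots,n\}} = \mathrm{id}$, which implies $g_n \to f^{-1}$ in $S_\infty$ and hence $f^{-1} \in G$ by closedness. Goodness is proved by a back-and-forth argument: given a compatible finite injection $p \colon n \to \N$, compatibility applied to the singleton $\{(0,0)\}$ forces $p(0)=0$, while compatibility applied to the successor singletons $\{(i,i+1)\}$ forces $p(i+1)=p(i)+1$; together these constraints confine $p$ to being a restriction of an element of $G$, and closedness of $G$ then yields an extension in $\Aut(Q_G) = G$.

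The principal obstacle is the design of $E$: the template must encode enough of the structure of $\mathbb{M}$ to distinguish $G$-orbits on finite tuples of arbitrary arity (ensuring $\Aut(Q_G) \le G$) and simultaneously be rigid enough that compatibility forces partial injections into the restrictions of $G$ (ensuring goodness). For closed $G$ whose orbit structure on $\N^n$ is only detected at arbitrarily large $n$, the tracks encoding the $R_i$ must be interleaved with the rigidity markers so that finite subsets of $E$ can distinguish tuples of any arity, and verifying that the back-and-forth succeeds in this generality — particularly checking that no compatible $p$ outside $G|_n$ sneaks in through the joint action on several tracks — is the technical core of the construction.
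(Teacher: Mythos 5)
Your overall strategy -- take the closed, downwards closed hull of the $G$-translates of a rigid template containing the self-loop $(0,0)$ and the successor edges $(i,i+1)$ -- is essentially the paper's; indeed, if you drop the encoding of $\mathbb{M}$ altogether and let $E$ be just the marker chain, your $Q_G$ coincides with the paper's quantifier, the closure of the downward closure of $\bigcup_k\Orb_G(\{\langle 0,0\rangle,\langle 0,1\rangle,\ldots,\langle k-1,k\rangle\})$. But the two verifications that carry the load are either wrong as stated or deferred. The goodness argument is incorrect: compatibility of $p$ with $Q_G$ cannot force $p(0)=0$ and $p(i+1)=p(i)+1$, since every restriction of an element of $G$ is compatible with $Q_G$ (you proved $G\le\Aut(Q_G)$), so for nontrivial $G$ there are compatible injections moving $0$. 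What must be shown is that compatibility forces the chain-shaped set $\{\langle p(0),p(0)\rangle,\langle p(0),p(1)\rangle,\ldots,\langle p(n-2),p(n-1)\rangle\}$ to sit inside a $G$-translate of the template in a position-respecting way, so that $p$ extends to an element of $G$, not to the identity. The paper proves exactly this by a convergence argument: any sequence in the downward closure of the translated finite chains converging to this configuration must eventually contain a translated chain matching it pointwise, because the self-loop marks the first point and each point has a unique successor. The same lemma is what your argument for $\Aut(Q_G)\le G$ needs in place of the unjustified claim $g_nf|_{\{0,\ldots,n\}}=\mathrm{id}$: the definition of $Q_G$ only gives that $g_nf$ carries $F_n$ injectively into $E$, and excluding nontrivial such embeddings is precisely the rigidity property you postpone to "the technical core."

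Moreover, the encoding of $\mathbb{M}$ into a binary $E$ on the same universe is both unnecessary and genuinely delicate. It is unnecessary because the orbit information of every arity is already carried by which chain-shaped finite configurations belong to the quantifier: the $G$-action enters through the translates, not through the template, which is why the paper's bare chain suffices. It is delicate because for some closed $G$ (for instance the automorphism group of the countable dense cyclic order, which is $2$- but not $3$-transitive) no binary relation on $\N$ has automorphism group $G$, so any coding by pairing functions and tracks necessarily breaks $G$-invariance of $E$; your construction survives only because the witnesses range over $G$ itself, and then the interleaved tracks are exactly the potential source of the extraneous embeddings and extraneous marker-shaped configurations that your rigidity claims would have to exclude but never do. So the proposal has the right architecture, but the rigidity/convergence lemma that does all the work -- and, in your version, its compatibility with the coding tracks -- is missing, and the goodness deduction as written is false.
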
 

\begin{proof} 
Let $P$ be the downward closure of $$\bigcup_{k\in\mathbb{N}} \Orb(\{\langle0,0\rangle,\langle0,1\rangle,\langle1,2\rangle,\ldots,\langle k-1,k\rangle\})$$
Then $P$ is $G$-invariant, so its closure $Q$ is $G$-invariant as well. 

Suppose $p:k\rightarrow\mathbb{N}$ is a finite injection compatible with $Q$. Then $s=\{\langle p(0),p(0)\rangle,\langle p(0),p(1)\rangle,..,\langle p(k-2),p(k-1)\rangle\}\in Q$. Let $\langle a^n: n<\omega\rangle$ be a sequence in $P$ converging to $s$. Then $a^n$ eventually contains a set of the form $\{\langle a^n_0,a^n_0\rangle,..,\langle a^n_{k-2},a^n_{k-1}\rangle\}$ and the eventual value of $a^n_i$ is $p(i)$ for all $i<k$. Hence $s\in P$ and $p$ can be extended to a permutation in $G$. 

We claim that $G=\Aut(Q)$. Let $g\in \Aut(Q)$. Then for every $m$ we have $\{\langle g(0),g(0)\rangle,\langle g(0),g(1)\rangle,..,\langle g(m-1),g(m)\rangle\}\in Q$. This is in fact an element of $P$ by the same argument as in the last paragraph. So there are permutations $h_m\in G$ for each $m$ with $g(i)=h_m(i)$ for all $i\leq m$. Since $h_m\rightarrow g$, this implies $g\in G$. 
\end{proof} 

Hence there is a correspondence between the closed subgroups of $S_{\infty}$ and good binary quantifiers $Q$. Let $\Inv(G)$ denote the family of closed $G$-invariant subsets of $2^{\mathbb{N}^2}$. Then $$\Aut(\Inv(G))=G$$ for every closed subgroup $G\leq S_{\infty}$.

\section{More quantifiers} 

We show that some other types of quantifiers have similar properties as good quantifiers, i.e. their automorphism group is closed and each orbit is definable from the quantifier. For these quantifiers there is a version of the Lopez-Escobar theorem. 

However, the set of quantifiers with these properties is not closed under unions or intersections. To see this, we consider quantifiers of the following form. 

\begin{defin} A principal quantifier is of the form 
\begin{itemize} 
\item $Q_A=\set{X\subseteq \N^k: A\subseteq X}$ or 
\item $Q^A=\set{X\subseteq \N^k: X\subseteq A}$ 
\end{itemize} 
where $A$ is a subset of $\mathbb{N}^k$. 
\end{defin} 

The automorphism group $\Aut(Q_A)=\Aut(Q^A)=\Aut(Q_A\cap Q^A)=\Aut(A)$ of a principal quantifier is closed and its orbits are definable in $\La_{\omega \omega}(Q_A)$. For $A\subseteq\mathbb{N}$ this is true since $m\in A$ if and only iff $\neg Q_An(m\neq n)$ holds, and for $A\subseteq\mathbb{N}^k$ this is shown in section 3.2. 

Let's fix some infinite and co-infinite set $A\subseteq \N$ and let $Q=\{A\}$, so that $\Aut(Q)=\Aut(A)$. A sentence $Qx\varphi(x,a)$ is false whenever $\varphi$ is a $\La_{\omega_{1}\omega}$-formula and $a\in\mathbb{N}^{<\omega}$, since the set $\{n:\varphi(n,a)\}$ is invariant under permutations fixing $a$. Thus any subset of $\mathbb{N}$  defined by an $\La_{\omega_{1}\omega}(Q)$-formula with parameters in $\{0,..,t\}$ is either a subset of $\{0,..,t\}$ or includes $\N -\{0,..,t\}$ by induction on the formulas. Hence the orbits of $\Aut(Q)$ are not definable in $\La_{\omega_{1}\omega}(Q)$.

\subsection{Clopen quantifiers}

Suppose $Q$ is a quantifier of type $\langle k\rangle$. Note that the automorphism group $\Aut(Q)$ of any closed quantifier $Q$ is closed. 

We say that a set $S \subseteq \N^k$ supports $Q$ if $A\in Q$ if and only if $B\in Q$ for all $A,B\subseteq \N$ with $A\cap S=B\cap S$. 

\begin{defin}
A minimal set $S \subseteq \N^k$ supporting $Q$ is called a support of $Q$. 
\end{defin}
 
\begin{lem}
Every closed quantifier $Q$ on $\mathbb{N}^k$ has a unique support. 
\end{lem}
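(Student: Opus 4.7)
The plan is to show that the family of supporting sets of $Q$ is closed under arbitrary intersections. Once this is established, $S_0 = \bigcap \{S \subseteq \N^k : S \text{ supports } Q\}$ is itself a supporting set and is contained in every other supporting set, which makes it the unique minimal support.

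The key step is: if $\{S_\alpha\}_\alpha$ is a family of supporting sets and $S = \bigcap_\alpha S_\alpha$, then $S$ supports $Q$. Given $A, B \subseteq \N^k$ with $A \cap S = B \cap S$, the symmetric difference $D = A \triangle B$ lies in $\N^k \setminus S$, so for each $d \in D$ I can choose some $S^d$ in the family with $d \notin S^d$. Enumerate $D = \{d_0, d_1, \ldots\}$ and set $A_n = A \triangle \{d_0, \ldots, d_{n-1}\}$. Consecutive sets $A_n$ and $A_{n+1}$ differ only at $d_n$, which lies outside $S^{d_n}$, so by the support property of $S^{d_n}$ we get $A_n \in Q \Leftrightarrow A_{n+1} \in Q$; inducting gives $A \in Q \Leftrightarrow A_n \in Q$ for all $n$.

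Closedness of $Q$ enters at the limit. In the product topology on $2^{\N^k}$, one has $A_n \to B$ pointwise, so if $A \in Q$ then every $A_n \in Q$ and closedness delivers $B \in Q$. For the converse I run a second pass: set $B_n = B \triangle \{d_0, \ldots, d_{n-1}\}$, observe that $B_n \to A$ and $B \in Q \Leftrightarrow B_n \in Q$ by the same reasoning, and conclude from closedness that $B \in Q$ implies $A \in Q$. Hence $A \in Q \Leftrightarrow B \in Q$, and $S$ supports $Q$.

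The main obstacle is precisely this limit step: closedness of $Q$ only pushes members of $Q$ forward under the limit and does nothing for the complement, which is why the approximation argument has to be run twice, once from $A$ toward $B$ and once from $B$ toward $A$. Once closure under intersections is in place, existence and uniqueness of the minimal support are immediate: $S_0$ is supporting and minimal, and any other minimal support $T$ satisfies $T \subseteq S_0$ (since $S_0 \cap T$ supports $Q$ and is contained in $T$) as well as $S_0 \subseteq T$ (since $S_0$ is contained in every supporting set), so $T = S_0$.
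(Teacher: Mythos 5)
Your proof is correct, and it takes a somewhat different route from the paper's. The paper splits the argument into three pieces: (i) supporting sets are closed under finite intersections (easy, and needs no closedness of $Q$); (ii) they are closed under intersections of decreasing countable sequences, where closedness of $Q$ enters via the approximants $A_n=(A\cap S_n)\cup(B\setminus S_n)$, which lie in $Q$ exactly when $A$ does and converge pointwise to $B$; and (iii) an explicit greedy construction of the support: enumerate $\N^k$ and delete each point whose removal still leaves a supporting set, the support being the intersection of the resulting decreasing sequence (minimality then uses (i) together with the fact that supersets of supporting sets support). You instead prove in one step that supporting sets are closed under \emph{arbitrary} intersections, by flipping the points of $A\triangle B$ one at a time, each flip licensed by a single supporting set from the family that omits that point, and invoking closedness of $Q$ twice, for the limits $A_n\to B$ and $B_n\to A$; the support is then simply the intersection of all supporting sets, so existence, minimality and uniqueness are immediate and no enumeration of $\N^k$ is needed. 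The way closedness is exploited is the same in both arguments (a sequence of sets equi-membership with $A$ converging pointwise to $B$), and both ultimately rest on countability of $\N^k$ --- in your case through countability of $A\triangle B$ --- but your decomposition is more economical: it replaces the greedy construction and its implicit minimality argument by the single observation that the family of supporting sets has a minimum element.
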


\begin{proof}
Easily the set of $S \subseteq \N^k$ which support $Q$ is closed under finite intersections. Suppose $S_n$ supports $Q$ for each $n\in \N$ and $S_n\subseteq S_m$ for $m\leq n$. Let $S=\bigcap_{n\in \N}S_n$. Suppose $A\in Q$ and $A\cap S=B\cap S$. Now $A_n=(A\cap S_n)\cup(B-S_n)\in Q$ for each $n$ since $A\in Q$. Then $B\in Q$ since $B$ is the limit of the sets $A_n$ and $Q$ is closed. 

Let $\N^k=\{a_n:n\in \N\}$. The support of $Q$ is the intersection of the sets $A_n$ where $A_0=\N^k$ and $A_{n+1}=A_n-\{a_n\}$ if this set supports $Q$ and $A_{n+1}=A_n$ otherwise. 
\end{proof}

Note that the set of finite subsets of $\N^k$ is supported by $\N^k-\{a\}$ for every $a\in \N^k$, so it does not have a support.

\begin{lem} The support of any clopen quantifier $Q$ on $\mathbb{N}^k$ is definable in $\La_{\omega\omega}(Q)$. 
\end{lem}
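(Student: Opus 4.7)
My plan is to exploit clopenness to show that the support $S$ is finite, then use the minimality of $S$ to characterize membership in $S$ via finite patterns in $Q$, and finally translate that characterization into a first-order formula in $\La_{\omega\omega}(Q)$.

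First I would show $S$ is finite. Since $Q$ is clopen in the compact space $2^{\N^k}$, both $Q$ and its complement are compact and open, hence each is a finite union of basic clopen cylinders. Letting $F \subseteq \N^k$ be the finite set of coordinates appearing in any such cylinder, membership in $Q$ depends only on intersection with $F$, so $F$ supports $Q$. By the preceding lemma the unique minimal support $S$ is contained in $F$, and is therefore finite.

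Next I would use minimality to rephrase $a \in S$ as a finitary statement about $Q$: $a \in S$ precisely when $S \setminus \{a\}$ does not support $Q$, i.e., when there are $A, B \subseteq \N^k$ with $A \cap (S \setminus \{a\}) = B \cap (S \setminus \{a\})$ but $A \in Q$, $B \notin Q$. Since $S$ itself supports $Q$, I may replace $A, B$ by $A \cap S, B \cap S$ without changing their $Q$-membership, arriving at $A, B \subseteq S$ with $A \triangle B = \{a\}$. Swapping if necessary, $A = B \cup \{a\}$ with $a \notin B \subseteq S \setminus \{a\}$. Thus $a \in S$ iff some finite $B \subseteq \N^k \setminus \{a\}$ of size at most $|S|-1$ satisfies $B \in Q$ if and only if $B \cup \{a\} \notin Q$.

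This characterization translates directly into the $\La_{\omega\omega}(Q)$-formula
\[
\varphi(\vec x) := \bigvee_{m=0}^{|S|-1} \exists \vec y_1 \ldots \exists \vec y_m \Bigl[\bigwedge_{i<j} \vec y_i \neq \vec y_j \wedge \bigwedge_i \vec y_i \neq \vec x \wedge \bigl(Q\vec z \textstyle\bigvee_i \vec z = \vec y_i\bigr) \not\leftrightarrow \bigl(Q\vec z (\vec z = \vec x \vee \textstyle\bigvee_i \vec z = \vec y_i)\bigr)\Bigr],
\]
where $\vec y_i$ and $\vec z$ range over $k$-tuples from $\N$ and each equality between tuples abbreviates the conjunction of coordinate equalities. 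Finiteness of $|S|$ makes all displayed conjunctions and disjunctions finite, so $\varphi$ is genuinely first-order; the distinctness conditions ensure that the two sets appearing on either side of the biconditional differ exactly at $\vec x$, which in turn forces $\vec x \in S$ whenever their $Q$-values differ. The only substantive step is the compactness observation that $|S|$ is finite; once that is in hand, the rest is bookkeeping.
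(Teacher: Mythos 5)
Your proof is correct, and its implementation differs from the paper's in two ways. First, where the paper merely notes that a quantifier is clopen if and only if it has finite support, you prove finiteness explicitly by compactness of $2^{\N^k}$ (only the finite cover of $Q$ by cylinders is needed, not of its complement, and the inclusion $S\subseteq F$ uses the closure of supporting sets under intersection from the proof of the preceding lemma --- both trivial to add). Second, and more substantially, the paper introduces auxiliary relations $R^{l,m}$ expressing that a partial $0/1$-pattern on a tuple extends to an element of $Q$, makes these definable by existentially quantifying over $r=t^k$ ``filler'' tuples inside the support, and then characterizes $a\in S$ by a truth-value flip between $R^{r+1,r}$ and $R^{r,r+1}$ when $a$ is moved from the $0$-side to the $1$-side. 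You instead trim the witnesses to the failure of $S\setminus\{a\}$ supporting $Q$ down to subsets of $S$, obtaining the cleaner characterization that $a\in S$ iff for some finite $B\subseteq\N^k\setminus\{a\}$ with $|B|<|S|$ exactly one of $B$ and $B\cup\{a\}$ lies in $Q$, and you express membership of a finite set in $Q$ directly by $Q\vec z\,\bigvee_i \vec z=\vec y_i$; both directions of your equivalence check out, since a flip witnessed by any finite $B$ avoiding $a$ already forces $a\in S$. Your route is more elementary and self-contained; what the paper's heavier $R^{l,m}$ machinery buys is that these relations are reused in the following proposition on definability of the orbits of $\Aut(Q)$, which your shortcut does not supply. (Cosmetic: in the $m=0$ disjunct the empty disjunction should be rendered as, say, $\vec z\neq\vec z$.)
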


\begin{proof}
Note that a quantifier is clopen if and only if it has finite support. Suppose the support of $Q$ is contained in $\{0,..,t-1\}^k$ and let $r=t^k$. 

Let $R^{l,m}$ be the set of tuples $\bar{a}\smallfrown \bar{b}$ with $\bar{a}\in(\N^k)^l$ and $\bar{b}\in(\N^k)^m$ such that the finite partial function mapping each $a_{i}\in\N^{k}$ to $0$ and each $b_{j}\in\N^{k}$ to $1$ can be extended to the characteristic function of an element of $Q$. Then $\bar{a}\smallfrown \bar{b}\in R^{l,m}$ if and only if there is a tuple $\bar{c}\in(\N^k)^r$ so that 
$$Qx(\bigwedge_{i<l}x\neq b_{i} \land (\bigvee_{i<m}x=a_{i} \vee \bigvee_{i<r}x=c_{i}))$$ 
holds. Hence $R^{l,m}$ is definable in $\La_{\omega \omega}(Q)$. 

Then $a\in \N^k$ is in the support of $Q$ if and only if there are tuples $\bar{b}\in(\N^k)^r$ and $\bar{c}\in(\N^k)^r$ such that $R^{r+1,r}(\langle a\rangle\smallfrown\bar{b}\smallfrown\bar{c})$ and $R^{r,r+1}(\bar{b}\smallfrown\bar{c}\smallfrown\langle a\rangle)$ don't have the same truth value. 
\end{proof} 

\begin{prop} The orbits of the automorphism group $\Aut(Q)$ of any clopen quantifier $Q$ on $\mathbb{N}^k$ are definable in $\La_{\omega\omega}(Q)$. 
\end{prop}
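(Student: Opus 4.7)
The plan is to combine Lemma 13 with the finiteness coming from clopenness. Because $Q$ is clopen, its support $S$ is finite and, by Lemma 13, $\La_{\omega\omega}(Q)$-definable. Let $T \subseteq \N$ be the set of natural numbers occurring as coordinates of elements of $S$; then $T$ is finite and $\La_{\omega\omega}(Q)$-definable from $S$.

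First, I want to describe $\Aut(Q)$ concretely. For any $f \in \Aut(Q)$ the image $f(S)$ again supports $Q$, so $f(S) = S$ by uniqueness of the support, and hence $f(T) = T$. Let $H$ be the automorphism group of the finite structure $(T, Q \cap 2^{T^k})$, i.e., the permutations $\sigma$ of $T$ with $\sigma(X) \in Q \iff X \in Q$ for every $X \subseteq T^k$. Using $A \in Q \iff A \cap S \in Q$ (which holds because $S$ supports $Q$ and $S \subseteq T^k$), one checks that a permutation of $\N$ lies in $\Aut(Q)$ precisely when its restriction to $T$ lies in $H$, the action on $\N \setminus T$ being unconstrained. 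Hence two tuples $a, b \in \N^n$ lie in the same $\Aut(Q)$-orbit iff they share the equality pattern, the pattern of $T$-membership, and the $H$-orbit of the sub-tuple formed by their $T$-entries.

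The first two conditions are immediately $\La_{\omega\omega}(Q)$-expressible via Lemma 13. The main obstacle, and the technical heart of the proof, is encoding the $H$-orbit condition in $\La_{\omega\omega}(Q)$. The key point is that $T$ is finite, so $(T, Q \cap 2^{T^k})$ is a finite combinatorial object whose automorphism orbits on tuples are pinned down by a finite amount of data expressible via $Q$ together with quantification restricted to $T$. Explicitly, I characterize the $H$-orbit of a $T$-tuple $c$ by a finite conjunction of formulas of the shape
\[
\exists y_0 \cdots \exists y_{m-1} \Bigl( \bigwedge_{l < m} y_l \in T \;\wedge\; Qz \bigvee_{j \in J} z = \langle u_{j(0)}, \ldots, u_{j(k-1)} \rangle \Bigr)
\]
and their negations, where the $u$'s enumerate the free variables $x_0, \ldots, x_{n-1}$ followed by the witnesses $y_0, \ldots, y_{m-1}$, $J$ ranges over subsets of $(n+m)^k$, and which formulas to include is dictated by the truth value on $c$. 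A back-and-forth argument in the finite structure $(T, Q \cap 2^{T^k})$ shows that finitely many such formulas identify any given $H$-orbit. Conjoining these with the equality and $T$-membership conditions yields the $\La_{\omega\omega}(Q)$-formula defining the $\Aut(Q)$-orbit of each tuple in $\N^n$.
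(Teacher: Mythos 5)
Your reduction is essentially the right one, and it matches the spirit of the paper's proof: the support $S$ is finite and definable, hence so is its set $T$ of coordinates; $g\in\Aut(Q)$ iff $g(T)=T$ and $g\restriction T$ preserves $Q\cap 2^{T^k}$; and the orbit of a tuple is determined by its equality pattern, its $T$-membership pattern, and the $H$-orbit of its $T$-part. The gap is precisely at what you call the technical heart. Your template formulas contain a \emph{single} $Q$-literal under the existential block, the witnesses are constrained only to lie in $T$ (no injectivity, no equations tying them to the free variables inside the block), and you then conjoin such formulas and their outer negations; the assertion that ``a back-and-forth argument shows that finitely many such formulas identify any given $H$-orbit'' is not proved, and it is false. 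Take $k=1$, $T=\{1,2,3,4,5\}$, $Q_0=\{\{1,2\},\{3,4\},\{3,5\},\{4,5\}\}$ and $Q=\{A\subseteq\N : A\cap T\in Q_0\}$. This $Q$ is clopen with support exactly $T$. Evaluated at a single element $c\in T$, each formula of your shape (or its negation) only says whether some set of the form $\{c\}\cup Y$ with $Y\subseteq T$ of bounded size lies in $Q_0$, or something not involving $c$ at all, because the witnesses may repeat and may equal $c$. These truth values are identical for $c=1$ and $c=3$ (no singleton is in $Q_0$, both points lie on a two-element set of $Q_0$), yet $1$ and $3$ are in different $\Aut(Q)$-orbits: every automorphism of $(T,Q_0)$ preserves the splitting of $Q_0$ into the edge $\{1,2\}$ and the triangle on $\{3,4,5\}$. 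So no conjunction of your template formulas and their negations defines the orbit of $1$.

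The repair is to force a single block of witnesses to describe a candidate permutation of $T$ coherently: existentially quantify $y_0,\dots,y_{m-1}$ and assert, under one quantifier block, that the $y_l$ are pairwise distinct elements of $T$ (hence enumerate $T$), that $x_i=y_l$ whenever the $i$-th entry of the reference tuple equals the $l$-th element of $T$, and, for \emph{every} $X\subseteq T^k$, the literal $Qz\bigvee_{j\in J_X} z=\langle y_{j(0)},\dots,y_{j(k-1)}\rangle$ taken positively if $X\in Q$ and negatively if $X\notin Q$; this says the induced map on $T$ is compatible with $Q$, and any permutation of $\N$ extending it lies in $\Aut(Q)$ by your own support computation. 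This is exactly the device of the paper: the relations $R^{l,m}$ record which finite $0$--$1$ patterns extend into $Q$, and the orbit of a tuple is defined by the existence of a finite injective extension covering the support that preserves $S$ and the $R^{l,m}$. With that change your argument goes through; as it stands, the crucial expressibility step fails.
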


\begin{proof}
Suppose the support $S$ of $Q$ is contained in $\{0,..,t-1\}^k$. We claim that $\langle a_0,..,a_n\rangle$ is in the orbit of $\langle0,..,n\rangle$ if and only if there is an extension $\langle a_0,..,a_{n+t}\rangle$ with $S\subseteq\{a_0,..,a_{n+t}\}$ such that for the finite partial map $f$ with $f(i)=a_i$ for $i\leq n+t$ 
\begin{itemize} 
\item $f$ is injective and 
\item $f$ and $f^{-1}$ preserve $S$ and $R^{j,l}$ for all $j,l$ with $j+l\leq n+t$. 
\end{itemize} 

Suppose these conditions hold. Let $g$ be any permutation of $\N$ extending $f$. We have $R \in Q$ if and only if $R \cap S$ extends to a relation in $Q$, for any relation $R\subseteq \N^k$. But this holds if and only if $g(R)\cap S$ extends to a relation in $Q$, since $g$ preserves $S$ and $R^{j,l}$. Hence $g\in\Aut(Q)$. 
\end{proof} 

The orbits of $\Aut(\mathcal{F})$ are $\La_{\omega_{1}\omega}(\mathcal{F})$-definable and $\Aut(\mathcal {F})$ is closed for any sequence $\mathcal{F}$ of clopen quantifiers by a slight variation of the previous proof.

\subsection{Combinations of principal quantifiers}

We show that if $Q$ is a finite boolean combination of principal quantifiers $Q_{A_k}$, then its automorphism group is closed and each orbit in $\mathbb{N}^{<\omega}$ is definable in $\La_{\omega_{1}\omega}(Q)$. 

Suppose $\langle A_k:k<n\rangle$ is a partition of $\mathbb{N}^d$ with $d<\omega$ and 
\[Q=\bigcup_{i}\bigcap_{k<n}Q_{A_k}^{s_i(k)}\]
with $s_{i}\in\{1,-1\}^{n}$ for $i<m$, where $Q_{A_k}^1=Q_{A_k}$ and $Q_{A_k}^{-1}=\neg Q_{A_k}$. We can assume that $n$ is minimal with these properties. In this situation we write $Q=\langle A_k, s_i\rangle=\langle A_k, s_i:k<n, i<m\rangle$. 

We say that a tuple $\bar{a}\in (\mathbb{N}^d)^{<\omega}$ occurs positively (negatively) in $Q=\langle A_k, s_i\rangle$ if there is some $i$ such that there is $j$ with $a_j\in A_k$ if and only if $s_i(k)=1$ ($s_i(k)=-1$). A tuple $\bar{a}$ occurs negatively if and only if $\psi(\bar{a}):= Qx\bigwedge_j (x\neq a_{j})$ holds. 

\begin{lem} If $Q=\langle A_k, s_i:k<n, i<m\rangle$, then there is an $\La_{\omega\omega}(Q)$-formula $\chi$ with $\chi(a,b)$ if and only if $a,b\in A_{k}$ for some $k<n$. 
\end{lem}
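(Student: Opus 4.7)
The plan is to take $\chi(a,b)$ to be the statement that $a$ and $b$ are interchangeable in every instance of $\psi$ with at most $n$ additional parameters, namely
$$\chi(a,b) := \forall c_{0}\ldots \forall c_{n-1}\;\bigwedge_{l\leq n}\bigl(\psi(c_{0},\ldots,c_{l-1},a)\leftrightarrow\psi(c_{0},\ldots,c_{l-1},b)\bigr),$$
where each $c_{j}$ ranges over $\N^{d}$. The starting observation is that $\psi(\bar a)$ depends only on the \emph{hit pattern} $P(\bar a)\in\{0,1\}^{n}$ defined by $P(\bar a)_{k}=1$ iff $\{\bar a\}\cap A_{k}\neq\emptyset$: indeed, since $A_{k}\subseteq\N^{d}\setminus\{\bar a\}$ iff $P(\bar a)_{k}=0$, writing $s_{i}^{+}(k)=1$ iff $s_{i}(k)=1$, one has $\psi(\bar a)$ iff $P(\bar a)=\mathbf{1}-s_{i}^{+}$ for some $i<m$. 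So $\psi$ is the characteristic function of a finite set $T\subseteq\{0,1\}^{n}$.

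The forward direction is immediate: if $a,b\in A_{k}$, then for every tuple $\bar c$ both $P(\bar c\smallfrown\langle a\rangle)$ and $P(\bar c\smallfrown\langle b\rangle)$ agree with $P(\bar c)$ off coordinate $k$ and have $k$-th coordinate equal to $1$, so $\psi(\bar c,a)\leftrightarrow\psi(\bar c,b)$ holds.

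For the converse, fix $a\in A_{k}$, $b\in A_{k'}$ with $k\neq k'$. By minimality of $n$, the partition obtained by merging $A_{k}$ and $A_{k'}$ does not represent $Q$; this translates to the statement that $T$ is not invariant under the equivalence on $\{0,1\}^{n}$ identifying two hit patterns that agree off $\{k,k'\}$ and have the same value of $q_{k}\vee q_{k'}$. The corresponding three-element equivalence class on the $(k,k')$-coordinates (with disjunction $1$) is $\{(1,1),(1,0),(0,1)\}$; a short case check shows that non-invariance forces a \emph{one-bit} separation within this class, since if the only separating pair were $(1,0),(0,1)$, then the common $T$-status of $(1,1)$ with one of them would already give a one-bit separation. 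So pick $q,q'$ in the class differing at exactly one of bits $k$ or $k'$, with $q\in T$ and $q'\notin T$ (or vice versa), and let $\bar c$ of length at most $n$ have hit pattern equal to whichever of $q,q'$ has a zero at the flipped coordinate; this is possible because minimality forces every $A_{k''}$ to be nonempty, so we can choose one element from each relevant $A_{k''}$. Then $P(\bar c\smallfrown\langle a\rangle)$ and $P(\bar c\smallfrown\langle b\rangle)$ are $q$ and $q'$ in one order or the other, so $\psi(\bar c,a)$ and $\psi(\bar c,b)$ differ.

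The main obstacle is this combinatorial step: extracting from minimality of $n$ a single-bit distinguishing transition within the class $\{(1,1),(1,0),(0,1)\}$, as opposed to the two-bit transition $(1,0)\leftrightarrow(0,1)$ which cannot be realised by adding only $a$ or only $b$ to a fixed $\bar c$. Given that step, the displayed formula $\chi$ is a first-order expression in $Q$ with $n\cdot d$ universal quantifiers and a finite conjunction of instances of $\psi$, hence an $\La_{\omega\omega}(Q)$-formula.
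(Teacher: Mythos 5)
Your proof is correct and is essentially the paper's argument: the same $\psi$, the same reduction of $\psi$ to hit patterns of finite tuples, and the same use of minimality of $n$ via the observation that agreement of the patterns would allow $A_k$ and $A_{k'}$ to be merged into one class. The only difference is cosmetic: the paper's $\chi(a,b)$ also compares with $\psi(\langle a,b\rangle\smallfrown\bar{c})$ while choosing $\bar{c}$ to avoid $A_k\cup A_{k'}$, whereas your pairwise comparison compensates by letting $\bar{c}$ itself hit those classes, which yields the same one-bit separations.
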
 

\begin{proof} Let $\chi(a,b)$ state that $\psi(\langle a\rangle\smallfrown\bar{c})$, $\psi(\langle b\rangle\smallfrown\bar{c})$, and $\psi(\langle a,b\rangle\smallfrown\bar{c})$ have equal truth values for all tuples $\bar{c}\in(\mathbb{N}^d)^n$. If $a,b\in A_{k}$ for some $k$, then $\chi(a,b)$ holds.

Suppose $a\in A_{0}$, $b\in A_{1}$, and $\chi(a,b)$ holds. Suppose $Q$ is the union of the sets 
\[\bigcup_{t\in T}(\neg Q_{A_{0}}\cap\neg Q_{A_{1}}\cap\bigcap_{j\geq2}Q_{A_{j}}^{t(j)})\] 
\[\bigcup_{u\in U}(Q_{A_{0}}\cap\neg Q_{A_{1}}\cap\bigcap_{j\geq2}Q_{A_{j}}^{u(j)})\] 
\[\bigcup_{v\in V}(\neg Q_{A_{0}}\cap Q_{A_{1}}\cap\bigcap_{j\geq2}Q_{A_{j}}^{v(j)})\] 
\[\bigcup_{w\in W}(Q_{A_{0}}\cap Q_{A_{1}}\cap\bigcap_{j\geq2}Q_{A_{j}}^{w(j)})\] 
We claim that $T=U=V$. To prove $T\subseteq U$, suppose $t\in T$ and pick $\bar{d}$ so that there is exactly one $d_{k}\in A_{k}$ for each $k\geq 2$ with $t(k)=1$, so that $\psi(\langle a,b\rangle\smallfrown\bar{d})$ holds. Then $\psi(\langle b\rangle\smallfrown\bar{d})$ holds and hence $t\in U$. The other cases are analogous. 
This shows that $n$ is not minimal, since
\[(\neg Q_{A_{0}}\cap\neg Q_{A_{1}})\cup(Q_{A_{0}}\cap\neg Q_{A_{1}})\cup(\neg Q_{A_{0}}\cap Q_{A_{1}})\]
 can be replaced by $\neg(Q_{A_{0}}\cap Q_{A_{1}})=\neg Q_{A_{0}\cup A_{1}}$. 
\end{proof} 

Note that the assumption that $n$ is minimal is essential here, since otherwise the proof does not even work for quantifiers of the form $Q=Q_{A}\cap Q_{B}$. 

\begin{lem} If $Q=\langle A_k, s_i:k<n, i<m\rangle$, then for each $j\leq n$ there is an $\La_{\omega\omega}(Q)$-formula $\theta_{j}$ such that $\theta_{j}(\bar{a},\bar{b})$ holds if and only if 
\begin{itemize} 
\item $\bar{a}$ occurs positively and has length $j$, 
\item $\bar{b}$ occurs negatively and has length $n-j$, and 
\item all elements of $\bar{a}\smallfrown\bar{b}$ are in different $A_{k}$. 
\end{itemize} 
\end{lem}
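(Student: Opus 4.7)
The plan is to build $\theta_j$ as a short conjunction using the formula $\chi$ produced by Lemma 19 together with the defining formula $\psi$ for negative occurrence. Writing $\bar c := \bar a \smallfrown \bar b$ for the concatenation, I would set
\[
\theta_j(\bar a, \bar b) \;:=\; \psi(\bar b) \;\wedge\; \bigwedge_{0 \le l < l' < n} \neg\chi(c_l, c_{l'}).
\]
Since $n$ is fixed, this is a finite conjunction of $\La_{\omega\omega}(Q)$-formulas, so $\theta_j \in \La_{\omega\omega}(Q)$, and the length conditions $|\bar a| = j$ and $|\bar b| = n-j$ are enforced syntactically by the arity of $\theta_j$.

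To verify the equivalence, I would first note that the $\neg\chi$ conjuncts say exactly that every two distinct entries of $\bar c$ lie in different $A_k$, and $\psi(\bar b)$ says exactly that $\bar b$ occurs negatively. So $\theta_j$ directly supplies the distinctness clause together with the negativity of $\bar b$. What remains is to show that, under these constraints, $\bar a$ occurs positively if and only if $\bar b$ occurs negatively.

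For this I would introduce $K^a := \{k<n : \exists l,\ a_l \in A_k\}$ and define $K^b$ analogously. Because $\langle A_k : k<n\rangle$ partitions $\mathbb N^d$, every entry of $\bar c$ lies in exactly one $A_k$; the distinctness condition then forces the $n$ entries of $\bar c$ into $n$ different $A_k$, so each $A_k$ receives exactly one, and thus $K^a$ and $K^b$ partition $\{0,\dots,n-1\}$. Since $s_i \in \{1,-1\}^n$, the equality $K^a = \{k : s_i(k) = 1\}$ for some $i$ is equivalent to $K^b = \{k : s_i(k) = -1\}$ for the same $i$, and both directions of the stated equivalence follow immediately.

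There is no serious obstacle once Lemma 19 is in hand; the argument is pure bookkeeping. The key observation to highlight is that when $\bar a \smallfrown \bar b$ meets each $A_k$ exactly once, the positive occurrence pattern of $\bar a$ is the complement of the negative occurrence pattern of $\bar b$, so a single invocation of $\psi$ suffices to encode both sign conditions; the role of the $\chi$-conjuncts is precisely to guarantee that the concatenation covers the partition cleanly so that this complementation works.
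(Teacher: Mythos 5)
Your proof is correct and follows essentially the same route as the paper's (which only sketches that $\theta_j$ is expressible from $\chi$ and $\psi$, noting that negativity of $\bar b$ plus the remaining conditions forces positivity of $\bar a$); your explicit formula and the pigeonhole/complementation argument via $K^a$, $K^b$ fill in exactly that sketch.
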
 

\begin{proof} The formula $\theta_j$ can be expressed by $\chi$ and $\psi$. Note that if $\bar{b}$ occurs negatively, then $\bar{a}$ has to occur positively, given the remaining conditions. 
\end{proof} 

\begin{lem} If $Q=\langle A_k, s_i:k<n, i<m\rangle$, then $g\in\Aut(Q)$ if and only if there are a permutation $p$ of $n$ and a permutation $r$ of $m$ such that $g(A_{k})=A_{p(k)}$ for all $k<n$ and $s_{r(i)}=s_{i}\circ p^{-1}$ for all $i<m$. 
\end{lem}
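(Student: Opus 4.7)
The plan is to prove the two directions separately. The backward direction is a direct computation: given $p,r$ satisfying the stated properties, for $X\in\bigcap_{k<n}Q_{A_k}^{s_i(k)}$ the identity $A_{k'}\subseteq g(X)\iff g^{-1}(A_{k'})\subseteq X\iff A_{p^{-1}(k')}\subseteq X$, combined with $s_{r(i)}=s_i\circ p^{-1}$, yields $g(X)\in\bigcap_{k'<n}Q_{A_{k'}}^{s_{r(i)}(k')}\subseteq Q$. The converse $g(X)\in Q\Rightarrow X\in Q$ follows by the same argument applied to $g^{-1}$, $p^{-1}$, and $r^{-1}$, which satisfy the analogous condition because $s_{r^{-1}(j)}=s_j\circ p$.

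For the forward direction, suppose $g\in\Aut(Q)$. To extract $p$, I would apply Lemma 20 to obtain an $\La_{\omega\omega}(Q)$-formula $\chi(a,b)$ equivalent to the statement that $a$ and $b$ lie in a common $A_k$. Since $g$ preserves $Q$, it preserves $\chi$, and therefore permutes the equivalence classes of $\chi$. By minimality of $n$ each $A_k$ is nonempty (an empty $A_k$ would force $s_i(k)=1$ on every nonempty summand and could then be dropped from the expression), so the induced action of $g$ on the classes is a permutation $p$ of $n$ with $g(A_k)=A_{p(k)}$.

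To extract $r$, assume without loss of generality that the $s_i$ are distinct (duplicates may be removed, and $r$ later extended by a matching of the duplicate blocks). For each $i<m$, the set $X_i:=\bigcup\{A_k:s_i(k)=1\}$ has type $s_i$ relative to the partition, using disjointness and nonemptiness of the $A_l$; hence $X_i\in Q$. The same computation as in the backward direction shows the type of $g(X_i)$ is $s_i\circ p^{-1}$, and since $g(X_i)\in Q$ this must coincide with some $s_j$, necessarily unique; set $r(i):=j$. Because $s\mapsto s\circ p^{-1}$ is a bijection of $\{-1,1\}^n$, the map $r$ is injective and hence a permutation of $m$.

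The main obstacle is producing $p$ from $g$, which is where Lemma 20 is essential: without a formula that logically captures the partition, there is no evident reason for an automorphism of $Q$ to respect the $A_k$'s. Once $p$ is available, the construction of $r$ and both directions of the biconditional are transparent symbolic manipulations with the expression $\langle A_k,s_i\rangle$.
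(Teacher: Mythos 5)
Your proof is correct and takes essentially the same approach as the paper: $p$ is obtained because $g\in\Aut(Q)$ preserves the $\La_{\omega\omega}(Q)$-definable relation $\chi$ of the earlier lemma and hence permutes the $A_k$, $r$ is obtained from the closure of the type set $\{s_i\}$ under composition with $p^{-1}$ (which you verify explicitly via the witness sets $X_i$, a step the paper merely asserts), and the backward direction is the same direct computation. The only caveat is your "remove duplicates" remark, which is harmless since the lemma, like the paper's own proof, implicitly assumes the $s_i$ are distinct.
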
 

\begin{proof} 
If $g\in \Aut(Q)$, then $g$ permutes the $A_k$ by the previous lemma. Let $p:n\rightarrow n$ be this permutation. For each $s:n\rightarrow \{-1,1\}$, there is $i<m$ with $s=s_i$ if and only if there is some $j<m$ with $s\circ p=s_j$. 

Suppose $p$ and $r$ are given and $x\in\bigcap_{k<n}Q_{A_k}^{s_i(k)}$ for some $i<n$. Then $g(x)\in\bigcap_{k<n}Q_{A_{p(k)}}^{s_i(k)}=\bigcap_{k<n}Q_{A_{k}}^{s_{r(i)}}$. 

\end{proof} 

This implies that $\Aut(Q)$ is closed. Suppose $g_k\rightarrow g\in S_{\infty}$ with $g_k\in \Aut(Q)$ for each $k<\omega$ and let $p_k$ be the permutation of $n$ corresponding to $g_k$ in the previous lemma. Then $p_k$ eventually takes a fixed value $p$, hence $g$ is according to $p$. 

Given a tuple $\bar{a}\in(\mathbb{N}^d)^j$, we can find $f:j\rightarrow n$ such that there is a tuple $\bar{c}\in(\mathbb{N}^d)^n$ with 
\begin{itemize} 
\item all $c_i$ are in different $A_k$ and $\bar{c}$ is maximal with this property, and 
\item $a_i$ and $c_{f(i)}$ are in the same $A_k$ for each $i<j$. 
\end{itemize} 

For tuples $\bar{c}$ with this property, let $M_{\bar{a},\bar{c}}=\langle\mathbb{N},\bar{a},\bar{c},\langle A_{p(k)}:k<n\rangle\rangle$, where $p$ is the unique permutation of $n$ such that $c_k\in A_{p(k)}$. Note that the Scott sentence $\varphi_{M_{\bar{a},\bar{c}}}$ of $M_{\bar{a},\bar{c}}$ is equivalent to a sentence in $\La_{\omega_1\omega}(Q)$ with  parameters $\bar{a}$ and $\bar{c}$, since $A_{p(k)}$ is definable from $Q$ and $c_i$. 

\begin{prop} For any finite boolean combination $Q$ of principal quantifiers of the form $Q_{A}$, the orbits of $\Aut(Q)$ are definable in $\La_{\omega_1\omega}(Q)$. 
\end{prop}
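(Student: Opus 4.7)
The strategy is to express the $\Aut(Q)$-orbit of a tuple $\bar{a}$ by first fixing a reference tuple $\bar{c}$ of representatives for the $A_k$'s, rewriting the Scott sentence $\varphi_{M_{\bar{a},\bar{c}}}$ as an $\La_{\omega_1\omega}(Q)$-formula (using the equivalence $\chi$ to define each $A_k$ from $c_k$, as already observed in the paragraph preceding the proposition), and then taking a disjunction over the finitely many label permutations $p\in S_n$ that are realised by elements of $\Aut(Q)$ according to the preceding lemma.

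More precisely, given $\bar{a}\in(\mathbb{N}^d)^{<\omega}$, use $\chi$ to extend $\bar{a}$ by a tuple $\bar{c}\in(\mathbb{N}^d)^n$ whose entries lie in pairwise distinct $A_k$'s and meet every $A_k$; let $q$ be the permutation of $n$ with $c_k\in A_{q(k)}$, and let $\Phi(\bar{x},\bar{y})$ be the rewriting of $\varphi_{M_{\bar{a},\bar{c}}}$ as an $\La_{\omega_1\omega}(Q)$-formula in free variables $\bar{x},\bar{y}$ in which each relation $A_{q(k)}$ is named by $\chi(\cdot,y_k)$. Let $P\subseteq S_n$ be the set of permutations $p$ for which there exists $r$ with $s_{r(i)}=s_i\circ p^{-1}$; by the previous lemma these are exactly the label actions of elements of $\Aut(Q)$. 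For $p\in P$ set $\sigma_p=q^{-1}pq$. The claim is that
\[\phi(\bar{x})\;\equiv\;\bigvee_{p\in P}\exists\bar{y}\,\Bigl(\bigwedge_{k<n}\chi(y_k,c_{\sigma_p(k)})\Bigr)\wedge\Phi(\bar{x},\bar{y})\]
defines the $\Aut(Q)$-orbit of $\bar{a}$. For the forward direction, any $g\in\Aut(Q)$ with associated label permutation $p\in P$ gives a witness $\bar{y}=g(\bar{c})$, since $g(c_k)\in A_{p(q(k))}=A_{q(\sigma_p(k))}$ forces $\chi(y_k,c_{\sigma_p(k)})$ and $g$ is an isomorphism $M_{\bar{a},\bar{c}}\cong M_{g(\bar{a}),\bar{y}}$. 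Conversely, any $\bar{y}$ witnessing the $p$-disjunct produces via $\Phi$ a permutation of $\mathbb{N}$ whose induced action on $\{A_k:k<n\}$ is exactly $p\in P$, so by the previous lemma this permutation lies in $\Aut(Q)$ and sends $\bar{a}$ to $\bar{x}$.

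The only real difficulty is combinatorial bookkeeping: the disjunction has to be indexed by the valid label permutations $P$, and one has to translate between the listing of $A_k$'s determined by $\bar{c}$ and the listing determined by the image tuple $\bar{y}$, which is precisely what the conjugation $\sigma_p=q^{-1}pq$ accomplishes. Without this alignment one would only be defining the orbit under the larger group of all permutations of $\mathbb{N}$ that permute the partition in some way, rather than those arising from $\Aut(Q)$. Once the indexing is correct, everything else follows from the $\chi$-lemma, the automorphism characterisation, and the expressibility of $\varphi_{M_{\bar{a},\bar{c}}}$ in $\La_{\omega_1\omega}(Q)$ noted in the paragraph preceding the proposition.
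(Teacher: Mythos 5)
Your proposal is correct and takes essentially the same route as the paper: fix a full transversal $\bar{c}$ for the partition, assert the existence of an image transversal satisfying the Scott sentence of $M_{\bar{a},\bar{c}}$ rewritten in $\La_{\omega_1\omega}(Q)$ via $\chi$, and invoke the characterization of $\Aut(Q)$ to conclude that the induced label permutation yields an automorphism carrying $\bar{a}$ to the given tuple. The only difference is bookkeeping: you enforce admissibility of the label permutation by a finite disjunction over the set $P$ (pinning the classes of $\bar{y}$ with $\chi$ against the parameters $\bar{c}$), whereas the paper enforces it with the single condition that $Qx\bigwedge_{k\in I}(x\neq d_k)$ holds exactly for the patterns $I=\{k:s_i(k)=-1\}$; these are interchangeable, and your imprecision that $P$ consists \emph{exactly} of realized label actions is harmless since only the inclusion you actually use is needed.
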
 

\begin{proof} Let $Q=\langle A_k, s_i:k<n, i<m\rangle$. Suppose $\bar{a}$ is a tuple of length $j$ and $f:j\rightarrow n$ and $\bar{c}$ are as above. 

We claim that $\bar{b}\in \Orb(\bar{a})$ if and only if there is a tuple $\bar{d}\in\mathbb{N}^n$ such that 
\begin{itemize} 
\item all $d_i$ are in different $A_k$ and $\bar{d}$ is maximal with this property, 
\item $b_i$ and $d_{f(i)}$ are in the same $A_k$ for each $i$, 
\item $M_{\bar{b},\bar{d}}\vDash \varphi_{M_{\bar{a},\bar{c}}}$, and 
\item for all $I\subseteq n$, $Qn(\bigwedge_{k\in I}{n\neq d_k})$ holds if and only if $I=\{k<n:s_i(k)=-1\}$ for some $i<n$. 
\end{itemize} 

Suppose these conditions hold for $\bar{b}$ and $\bar{d}$. Since $M_{\bar{b},\bar{d}}$ models $\varphi_{M_{\bar{a},\bar{c}}}$, there is a permutation $g:\mathbb{N}\rightarrow\mathbb{N}$ mapping $\bar{a}$ to $\bar{b}$ and $\bar{c}$ to $\bar{d}$. Let $p:n\rightarrow n$ be the permutation of the indices of $c_i$ induced by this map. Then $g(A_k)=A_{p(k)}$ for each $k<n$. The last condition implies that for every $i<m$ there is some $j<m$ such that $s_i\circ p=s_j$. Hence $g$ preserves $Q$ by the previous lemma. 
\end{proof} 

Note that the proposition is also true for boolean combinations of principal quantifiers $Q^{A_k}$ since $Q^{A}x\varphi (x)$ can be expressed as $Q_{\neg A}x\neg\varphi (x)$. However, the two types of principal quantifiers cannot be mixed by the example at the beginning of section 3. 

By a slight variation of the previous proof we get 

\begin{prop} Suppose $\mathcal{F}=\langle Q_i:i<\omega\rangle$ is a sequence of finite boolean combinations of principal quantifiers $Q_{A_{i,k}}$. Then $\Aut(\mathcal{F})$ is closed and the orbits of $\Aut(\mathcal{F})$ are definable in $\La_{\omega_1\omega}(\mathcal{F})$. 
\end{prop}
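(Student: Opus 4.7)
The plan is to adapt the proof of the preceding proposition on a single finite boolean combination of principal quantifiers to the sequence case, stitching the per-$i$ descriptions together by countable conjunctions available in $\La_{\omega_1\omega}$.

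Closedness is immediate: $\Aut(\mathcal{F}) = \bigcap_{i<\omega} \Aut(Q_i)$, and each factor is closed by the remark following the lemma that describes the elements of $\Aut(Q_i)$ via a pair $(p,r)$ of permutations of $n_i,m_i$; a countable intersection of closed subgroups of $S_\infty$ is closed.

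For orbit definability, write each $Q_i = \langle A_{i,k}, s_{i,l} : k<n_i, l<m_i\rangle$ with $n_i$ minimal, so that $\langle A_{i,k} : k<n_i\rangle$ partitions some $\mathbb{N}^{d_i}$. The same-cell lemma applied to $Q_i$ supplies an $\La_{\omega\omega}(Q_i)$-formula $\chi_i(x,y)$ expressing that $x,y$ lie in the same cell of the $i$-th partition. For each $i$ fix a tuple $\bar c_i = \langle c_{i,0},\ldots,c_{i,n_i-1}\rangle$ with one representative per cell and let $p_i$ be the permutation of $n_i$ with $c_{i,k}\in A_{i,p_i(k)}$. Form the countable expansion
\[
M_{\bar a,\bar C} = \langle \mathbb{N}, \bar a, \langle c_{i,k}\rangle_{i,k}, \langle A_{i,p_i(k)}\rangle_{i,k}\rangle,
\]
and translate its Scott sentence $\varphi_{M_{\bar a,\bar C}}$ into an $\La_{\omega_1\omega}(\mathcal{F})$-formula in the parameters $\bar a,\bar C$ by replacing every occurrence of the predicate $A_{i,p_i(k)}$ with $\chi_i(\cdot,c_{i,k})$.

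The orbit of $\bar a$ is then characterized as follows: $\bar b\in\Orb(\bar a)$ iff there is $\bar D=\langle \bar d_i\rangle_{i<\omega}$ such that, for every $i$, the entries of $\bar d_i$ lie in pairwise distinct cells of the $i$-th partition and $\bar d_i$ is maximal with this property; the expansion $M_{\bar b,\bar D}$ satisfies $\varphi_{M_{\bar a,\bar C}}$; and for every $i$ and every $I\subseteq n_i$, the sentence $Q_i x\bigwedge_{k\in I}(x\neq d_{i,k})$ holds iff $I=\{k<n_i : s_{i,l}(k)=-1\}$ for some $l<m_i$. The forward direction takes $d_{i,k}=g(c_{i,k})$ for a witnessing $g\in\Aut(\mathcal{F})$; for the converse, the Scott clause supplies a permutation $g$ of $\mathbb{N}$ sending $\bar a$ to $\bar b$ and $c_{i,k}$ to $d_{i,k}$ for all $i,k$, and the third clause together with the $\Aut(Q_i)$-characterization lemma forces the cell permutation induced by $g$ on the $i$-th partition to come from some pair $(p,r)$ as in that lemma, so $g\in\Aut(Q_i)$ for every $i$ and hence $g\in\Aut(\mathcal{F})$. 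The whole condition is $\La_{\omega_1\omega}(\mathcal{F})$-expressible by an existential over $\bar D$ (coded as a single sequence of naturals) followed by a countable conjunction over $i$.

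The step I expect to be the main obstacle is not conceptual but bookkeeping: carrying the translation of the Scott sentence through uniformly in the varying arities $d_i$ so that the final sentence really lies in $\La_{\omega_1\omega}(\mathcal{F})$, and checking that the third clause of the characterization precisely captures simultaneous compatibility with the sign-patterns $\{s_{i,l}:l<m_i\}$ for every $i$ at once.
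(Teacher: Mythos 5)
Your closedness argument is fine, and your description of the intended per-$i$ conditions matches the single-quantifier proof. The genuine gap is exactly at the step you flagged as ``bookkeeping'': the final condition quantifies existentially over the infinite sequence $\bar D=\langle\bar d_i\rangle_{i<\omega}$ of transversals (and, already before that, the translated Scott sentence of $M_{\bar a,\bar C}$ carries the infinitely many parameters $c_{i,k}$, hence infinitely many variable slots $d_{i,k}$ on the $\bar b$-side). $\La_{\omega_1\omega}$ allows countable conjunctions but only finite quantifier strings and finitely many free variables, and over the bare structure $(\N,\mathcal F)$ there is no pairing or coding apparatus, so ``an existential over $\bar D$ coded as a single sequence of naturals'' is not an $\La_{\omega_1\omega}(\mathcal F)$-formula. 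Nor can you repair this by the obvious weakening, namely the countable conjunction over $n$ of ``there exist $\bar d_0,\dots,\bar d_{n-1}$ satisfying the conditions for $i<n$'': that formula defines $\bigcap_n\Orb_{G_n}(\bar a)$ where $G_n=\bigcap_{i<n}\Aut(Q_i)$, which can be strictly larger than $\Orb_{\Aut(\mathcal F)}(\bar a)$. For instance, take $h:\N\to 2^\omega$ with $h^{-1}(1^\omega)=\{b\}$, $h^{-1}(0^\omega)$ infinite containing $a$, and $h^{-1}(\{s:\sigma\subseteq s\})$ infinite for every finite string $\sigma$, and let $Q_i=Q_{A_{i,0}}\cup Q_{A_{i,1}}$ with $A_{i,\epsilon}=\{x:h(x)(i)=\epsilon\}$. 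Then for every $n$ some $g_n\in G_n$ maps $a$ to $b$ (all finite-level atoms are infinite), but no $g\in\bigcap_i\Aut(Q_i)$ does, since such a $g$ would have to inject the infinite atom $h^{-1}(0^\omega)$ into the singleton $h^{-1}(1^\omega)$. So the assembly over all $i$ simultaneously is the mathematical content, not bookkeeping.

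A correct variation avoids transversal quantification altogether: for each $i$, the same-cell relation given by $\chi_i$ and the $n_i$-tuple relation expressing your fourth clause (``$\bar y$ is a transversal and, for every $I\subseteq n_i$, $Q_ix\bigwedge_{k\in I}x\neq y_k$ holds iff $I$ is a negative set of some $s_{i,l}$'') are parameter-free $\La_{\omega\omega}(\mathcal F)$-definable relations, and by the characterization lemma a permutation preserves both of them for every $i$ exactly when it lies in $\bigcap_i\Aut(Q_i)=\Aut(\mathcal F)$. Hence $\Aut(\mathcal F)$ is the automorphism group of a countable family of $\La_{\omega\omega}(\mathcal F)$-definable relations, and the earlier proposition on families of relations (orbit of $\bar a$ defined by the Scott sentence of $\langle\N,\text{relations},\bar a\rangle$) yields definability of the orbits in $\La_{\omega_1\omega}$ of those relations; substituting their definitions gives $\La_{\omega_1\omega}(\mathcal F)$-definability with no infinite quantifier block. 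You should restructure the definability half of your argument along these lines rather than trying to lift the transversal-existential formula verbatim.
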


\section{Higher cardinalities}

Some of the previous results generalize when $\mathbb{N}$ is replaced with an uncountable cardinal $\kappa$.  Let's always suppose $\kappa^{<\kappa}=\kappa$. 

The logic space
$$X_{\tau}=\prod_{R\in \tau} 2^{\kappa^{a(R)}}$$ 
for a relational signature $\tau$ of size $\leq\kappa$ is equipped with the product topology. The topology on $2^{\kappa^n}$ is given by the basic open sets  $U(s)=\{f\in 2^{\kappa^n}:s\subseteq f\}$ for partial functions $s\in 2^{\kappa^n}$ of size $<\kappa$. Let $S_\kappa$ denote the permutation group of $\kappa$ with the topology from $\kappa^\kappa$. 

The $\kappa$-Borel subsets of $2^{\kappa^n}$ and $X_{\tau}$ are generated from the basic open sets by unions and intersections of length $\kappa$ and complements. A subspace is $\kappa$-Baire if $\bigcap_{\alpha<\kappa}U_{\alpha}$ is dense in the subspace for every sequence $\langle U_{\alpha}:\alpha<\kappa\rangle$ of open dense sets in the subspace. 

A generalized quantifier of type $\langle\alpha\rangle$ on $\kappa$ for $\alpha<\kappa$ is a subset of $2^{\kappa^\alpha}$. 

\begin{lem} 
Suppose $Q$ is a closed quantifier on $\kappa$ of type $\langle\alpha\rangle$ with $\alpha<\kappa$. Then $\Aut(Q)$ is closed in $S_\kappa$. 
\end{lem}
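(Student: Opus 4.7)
The plan is to show that the complement of $\Aut(Q)$ is open in $S_\kappa$. Fix an arbitrary $g \in S_\kappa \setminus \Aut(Q)$; the goal is to exhibit a basic open neighborhood $V$ of $g$ disjoint from $\Aut(Q)$.

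First I would choose a witness $A \subseteq \kappa^{\alpha}$ for $g \notin \Aut(Q)$, so that (after possibly replacing $g$ by $g^{-1}$) $A \in Q$ but $g(A) \notin Q$. Using that $Q$ is closed, $g(A)$ has a basic open neighborhood $U(s)$ disjoint from $Q$, where $s$ is a partial function from $\kappa^{\alpha}$ to $2$ with $|\mathrm{dom}(s)| < \kappa$.

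Next let $C \subseteq \kappa$ be the set of all ordinals occurring as a coordinate of some tuple in $\mathrm{dom}(s)$, and set $D = g^{-1}[C]$. Since $\alpha < \kappa$ and $|\mathrm{dom}(s)| < \kappa$, and since $\kappa^{<\kappa} = \kappa$ forces $\kappa$ regular, both $C$ and $D$ have size $<\kappa$. Then $V = \{h \in S_\kappa : h \upharpoonright D = g \upharpoonright D\}$ is a basic open neighborhood of $g$ in $S_\kappa$.

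The key step is to verify that $V \cap \Aut(Q) = \emptyset$. For $h \in V$, since $g$ bijects $D$ onto $C$ and $h$ agrees with $g$ on $D$, $h$ also bijects $D$ onto $C$, so $h^{-1} \upharpoonright C = g^{-1} \upharpoonright C$. For every tuple $x \in \mathrm{dom}(s)$ all components of $x$ lie in $C$, so the componentwise action gives $h^{-1}(x) = g^{-1}(x)$, and hence $x \in h(A) \leftrightarrow x \in g(A)$. Thus $h(A)$ extends $s$, so $h(A) \notin Q$ and $h \notin \Aut(Q)$.

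The only real obstacle I anticipate is cardinal bookkeeping, namely ensuring that $|C| < \kappa$ and that $V$ is a genuine basic open set; both rely on $\alpha < \kappa$ together with the regularity of $\kappa$ that follows from $\kappa^{<\kappa} = \kappa$. Once the sets $C$ and $D$ are chosen correctly, the rest of the verification is routine.
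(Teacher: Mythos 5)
Your proof is correct, and it takes a slightly different route from the paper's. The paper disposes of the lemma with a convergence argument: if $g_\beta\in\Aut(Q)$ and $g_\beta\rightarrow g$ in $S_\kappa$, then $g_\beta(R)\rightarrow g(R)$ for each $R\in Q$, so $g(R)\in Q$ by closedness of $Q$, and the same applies to $g^{-1}$ since $g_\beta^{-1}\rightarrow g^{-1}$; implicitly this uses that, under $\kappa^{<\kappa}=\kappa$, closedness in $S_\kappa$ is detected by limits of $\kappa$-sequences and that inversion is continuous. You instead show the complement of $\Aut(Q)$ is open by extracting a small support: a basic neighborhood $U(s)$ separating $g(A)$ from $Q$, the set $C$ of coordinates occurring in $\mathrm{dom}(s)$, and the neighborhood $\{h: h \text{ agrees with } g \text{ on } D\}$ with $D=g^{-1}[C]$; your verification that any such $h$ satisfies $h^{-1}(x)=g^{-1}(x)$ for all $x\in\mathrm{dom}(s)$, hence $h(A)\in U(s)$, is right, and the cardinality bookkeeping ($|C|\le|\mathrm{dom}(s)|\cdot|\alpha|<\kappa$ by regularity of $\kappa$) is exactly what is needed. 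What your version buys is that it avoids any appeal to sequential characterizations of closedness, making the role of the bounded ($<\kappa$-supported) topology explicit. The one step to state more carefully is the reduction ``after possibly replacing $g$ by $g^{-1}$'': in the case where the witness is $A\in Q$ with $g^{-1}(A)\notin Q$, your construction yields a neighborhood of $g^{-1}$ disjoint from $\Aut(Q)$, and you should either note that inversion is a self-homeomorphism of $S_\kappa$ mapping $\Aut(Q)$ onto itself (so this transfers to a neighborhood of $g$), or handle that case directly: taking $V'=\{h: h \text{ agrees with } g \text{ on } C\}$, any $h\in V'$ satisfies $h(x)=g(x)$ for $x$ with coordinates in $C$, so $h^{-1}(A)$ extends the partial function $s$ separating $g^{-1}(A)$ from $Q$, and again $h\notin\Aut(Q)$. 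With that caveat addressed, the argument is complete.
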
 

\begin{proof} 
Suppose $g_\alpha\in\Aut(Q)$ for each $\alpha<\kappa$ and $g_\alpha\rightarrow g\in S_\kappa$. Let $R$ be a relation in $Q$. Then $g_\alpha(R)\rightarrow g(R)$ and hence $g(R)\in Q$. Since $g_\alpha^{-1}\rightarrow g^{-1}$ we have $g\in\Aut(Q)$. 
\end{proof} 

\begin{prop} 
Suppose $G$ is a closed $\kappa$-Baire subgroup of $S_{\kappa}$ and $\mathcal{F}$ is the family of orbits of elements of $\kappa^{<\kappa}$. Then a subset of $X_{\tau}$ is $\kappa$-Borel and $G$-invariant if and only if it is definable in $\La_{\kappa^+\kappa}(\mathcal {F})$. 
\end{prop}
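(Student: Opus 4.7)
The easy direction is a routine induction on $\La_{\kappa^+\kappa}(\mathcal{F})$-formulas: atomic $\tau$-formulas define clopen sets; the symbols of $\mathcal{F}$ denote $G$-orbits and so are $G$-invariant; Boolean connectives of length $\leq\kappa$ preserve both $\kappa$-Borelness and $G$-invariance; and quantifications of length $<\kappa$ do likewise.

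For the hard direction, I plan to generalize Vaught's argument used in the proof of Proposition 3. By recursion on the $\kappa$-Borel rank of $B\subseteq X_\tau$ and on $k<\kappa$, I build an $\La_{\kappa^+\kappa}(\mathcal{F})$-formula $\varphi_k^B$ with $k$ free variables such that for all $M\in X_\tau$ and $a\in\kappa^k$,
\[
M\models\varphi_k^B(a)\iff\{g\in G:g(M)\in B\}\text{ is $\kappa$-comeager in }\{g\in G:a\subseteq g^{-1}\}.
\]
Applied to $k=0$ and $B=A$, the $G$-invariance of $A$ combined with the $\kappa$-Baire hypothesis on $G$ then gives $M\in A$ if and only if $M\models\varphi_0^A$, which is the desired defining formula.

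The base case uses that every basic open subset of $X_\tau$ is determined by a partial function of size $<\kappa$, so the relevant Vaught transform unfolds into a disjunction of length $\leq\kappa$ over atomic formulas from $\mathcal{F}$ asserting that a specific extension of $a$ lies in a prescribed $G$-orbit. The inductive step for unions and intersections of length $\leq\kappa$ is immediate from the corresponding connectives. For complements, I use the $\kappa$-Baire alternative in $G$: $\varphi_k^{X_\tau\setminus B}(a)$ is equivalent to the assertion that for every tuple $b\in\kappa^{<\kappa}$ properly extending $a$, $\varphi^B_{|b|}(b)$ fails, which is expressible as a $\kappa$-length conjunction of universal quantifiers each of length $<\kappa$. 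The $\kappa$-Baireness of $G$ guarantees that, restricted to any basic open subset of $G$, the set $\{g\in G:g(M)\in B\}$ is either $\kappa$-comeager or $\kappa$-meager, which is exactly the dichotomy making the complement step work.

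The main obstacle is to verify that the formulas $\varphi_k^B$ remain within $\La_{\kappa^+\kappa}(\mathcal{F})$. The assumption $\kappa^{<\kappa}=\kappa$ ensures that $X_\tau$ has a basis of clopen sets of size $\kappa$ and that the set of extensions $b\in\kappa^{<\kappa}$ quantified over at each stage has size $\kappa$; this keeps every inductive step within connectives of length $\leq\kappa$ and quantifiers of length $<\kappa$, so the total formulas stay of size $<\kappa^+$. A secondary check is that the Vaught-style sets arising in the recursion all possess the $\kappa$-Baire property in $G$, which follows by an induction parallel to the construction of $\varphi_k^B$, using the $\kappa$-Baireness of $G$ and closure of the $\kappa$-Baire-property sets under Boolean combinations of size $\kappa$.
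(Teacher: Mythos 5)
Your overall plan is the paper's: the paper proves this by repeating Vaught's transform argument from the countable case, with $\kappa$-Baire category in the closed group $G$ and induction on $\kappa$-Borel rank, and your easy direction and cardinality bookkeeping via $\kappa^{<\kappa}=\kappa$ are fine. However, your complement step is wrong as written, and the error occurs exactly at the point where the relations of $\mathcal{F}$ must be used. You quantify over \emph{all} proper extensions $b\in\kappa^{<\kappa}$ of $a$. Under your own inductive equivalence ($M\models\varphi^B_{|b|}(b)$ iff $\{g\in G:g(M)\in B\}$ is $\kappa$-comeager in $\{g\in G:b\subseteq g^{-1}\}$), any extension $b$ that is not realized as an initial segment of some $g^{-1}$ with $g\in G$ --- for instance any non-injective $b$ --- gives $\{g\in G:b\subseteq g^{-1}\}=\emptyset$, where comeagerness holds vacuously; hence $\varphi^B_{|b|}(b)$ is true for such $b$ and your universal conjunction fails for every $a$, so the formula you describe for $X_\tau\setminus B$ defines essentially the empty set. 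The conjunction must be restricted to extensions $b$ lying in the $G$-orbit of $\langle 0,1,\dots\rangle$ of the appropriate length (equivalently, those $b$ with $\{g\in G:b\subseteq g^{-1}\}\neq\emptyset$), and it is precisely this restriction that is expressed by the orbit relations in $\mathcal{F}$; this is the paper's remark, in the countable case, that the set of injections $k\to\mathbb{N}$ is replaced by the orbit of $\langle 0,1,\dots,k-1\rangle$.

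A second, related misstatement: it is not true that, restricted to any basic open subset of $G$, the set $\{g\in G:g(M)\in B\}$ is either $\kappa$-comeager or $\kappa$-meager (already for a basic clopen $B$ this set can be a nonempty proper relatively clopen subset of a basic neighborhood, hence neither). What the complement step actually needs is the localization property for sets with the $\kappa$-Baire property: such a set is $\kappa$-meager in $[a]\cap G$ if and only if it is not $\kappa$-comeager in any nonempty basic open $[b]\cap G\subseteq[a]\cap G$. For this one uses that $\{g\in G:g(M)\in B\}$ is $\kappa$-Borel in $G$ (continuity of $g\mapsto g(M)$), that $\kappa$-Borel sets have the $\kappa$-Baire property (the $\kappa$-meager ideal is closed under unions of length $\kappa$ since $\kappa\cdot\kappa=\kappa$), and the $\kappa$-Baireness of $G$ to rule out the degenerate case; the same Baireness, together with $G$-invariance of $A$, gives the final step at $k=0$. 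With these two corrections your argument is the one the paper intends.
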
 

\begin{proof} 
As in the proof of Proposition 9. 
\end{proof} 

Good quantifiers are defined as in section 2.2 but finite tuples are replaced by elements of $\kappa^{<\kappa}$. 

\begin{prop} 
Suppose $G$ is a closed $\omega_1$-Baire subgroup of $S_{\omega_1}$. There is a good binary quantifier $Q_G$ with $G=\Aut(Q_G)$. 
\end{prop}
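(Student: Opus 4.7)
The plan is to mimic Proposition 14 with transfinite analogues and to use the $\omega_1$-Baire hypothesis where the countable argument is purely topological. Set $t_\alpha = \{\langle 0,0\rangle\} \cup \{\langle \beta,\gamma\rangle : \beta<\gamma<\alpha\}$ for each $\alpha<\omega_1$, let $P$ be the downward closure of $\bigcup_{\alpha<\omega_1}\Orb_G(t_\alpha)$ in $2^{\omega_1^2}$, and let $Q_G$ be the closure of $P$ in the $\omega_1$-topology. Then $Q_G$ is closed by definition, $G$-invariant because $G$ acts by homeomorphisms on the $G$-invariant set $P$, and downward closed: given $A\in Q_G$, $B\subseteq A$, and a basic open neighborhood $U$ of $B$ determined by a $<\omega_1$-sized domain $D$, one obtains $a\in P$ matching $A$ on $D$, and then $a\cap B\in P\cap U$.

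To see that $Q_G$ is good, take an injection $p:\alpha\to\omega_1$ with $\alpha<\omega_1$ that is compatible with $Q_G$ and look for $g\in G$ with $g|_\alpha=p$. Since $|p(t_\alpha)|<\omega_1$, the set $\{A : A\supseteq p(t_\alpha)\}$ is a basic open neighborhood of $p(t_\alpha)\in Q_G$, so compatibility produces $a\in P$ with $a\supseteq p(t_\alpha)$, and hence $h\in G$ and $\delta<\omega_1$ with $h(t_\delta)\supseteq p(t_\alpha)$. Triangle rigidity forces $h(0)=p(0)$ and shows that $h^{-1}$ induces the same well-order on $p(\alpha)$ as does $p^{-1}$; thus $p=h\circ\iota$ for the order-preserving enumeration $\iota:\alpha\to\delta$ of $h^{-1}(p(\alpha))$.

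The main obstacle is that $\iota$ need not be the identity, so this $h$ need not restrict to $p$ on $\alpha$. I would address this by transfinite recursion on $\alpha$, handling successor stages by the argument above applied to $p$ together with the inductive hypothesis, and limit stages using the $\omega_1$-Baire property of $G$. Specifically, the sets $V_\beta = \{g\in G : g|_{\beta+1}=p|_{\beta+1}\}$ for $\beta<\alpha$ are decreasing basic clopen subsets of $G$; arranging their nonempty intersection via a back-and-forth construction that extends the range along $\omega_1\setminus\alpha$ to enforce convergence, together with $\omega_1$-Baire-ness, produces $g\in\bigcap_{\beta<\alpha}V_\beta$.

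Finally, $G=\Aut(Q_G)$: one inclusion is the $G$-invariance of $Q_G$. For the other, if $g\in\Aut(Q_G)$, each restriction $g|_\alpha$ is compatible with $Q_G$, so goodness yields $h_\alpha\in G$ with $h_\alpha|_\alpha=g|_\alpha$; as $h_\alpha\to g$ in $S_{\omega_1}$, closedness of $G$ gives $g\in G$. The delicate point throughout is the limit stage of goodness, where the $\omega_1$-Baire hypothesis is precisely what allows bounded approximations to glue into a genuine extension of $p$.
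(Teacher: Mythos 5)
Your construction of $Q_G$ as the closure of the downward closure of the $G$-orbits of the long linear orders $t_\alpha$ is fine as far as closedness, $G$-invariance and downward closedness go, but the heart of the proposition is goodness, and there your proposal has a genuine gap that the proposed recursion does not close. With the order template, containment $p(t_\alpha)\subseteq h(t_\delta)$ only yields $p=h\circ\iota$ for some order-embedding $\iota$ fixing $0$, as you yourself note; the repair then fails at both kinds of stages. At successor stages no argument is actually given: knowing that $p|_\beta$ extends to some element of $G$ and that $p|_{\beta+1}=h\circ\iota$ does not produce an element of $G$ extending $p|_{\beta+1}$, since $\iota$ may move points and the two pieces of information do not combine. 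At limit stages, the $\omega_1$-Baire property is a statement about \emph{dense} open sets; it says nothing about decreasing sequences of nonempty, non-dense clopen sets such as $V_\beta=\{g\in G:g|_{\beta+1}=p|_{\beta+1}\}$. Such intersections can be empty even in Baire Polish groups (in $\Aut(\mathbb{Q},<)$ take a chain of finite partial automorphisms whose union maps a sequence cofinal in a cut onto an unbounded set), and here $\bigcap_{\beta<\alpha}V_\beta\neq\emptyset$ is precisely the statement to be proved, so invoking Baireness begs the question; the "back-and-forth extending the range" is not available for an arbitrary closed subgroup.

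The paper avoids any recursion by choosing a rigid template instead of the linear order: fix an injection $f:\omega_1\rightarrow\mathcal{P}(\omega)$ and let $P$ be the downward closure of the union of the $G$-orbits of the relations $r_\gamma=\{\langle0,0\rangle\}\cup\{\langle n,n+1\rangle:n<\omega\}\cup\{\langle n,\alpha\rangle:\omega\leq\alpha<\gamma,\ n\in f(\alpha)\}$ for $\gamma<\omega_1$. In such a relation the loop together with the $\omega$-chain pins each natural number down to its exact position, and each ordinal $\alpha\geq\omega$ is then determined by its set of natural-number predecessors, which identifies it uniquely because $f$ is injective. Consequently containment of $p(r_\gamma)$ in $h(r_{\gamma'})$ forces $h$ to agree with $p$ on its whole domain, exactly as in the countable case (Proposition 13), and goodness together with $G=\Aut(Q_G)$ follows by the same argument, with no transfinite recursion and no appeal to Baire category at this point. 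If you want to keep your approach, you would have to replace $t_\alpha$ by a structure in which every point of the domain is determined by finitary data of this kind; with the bare linear order the needed rigidity is simply absent.
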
 

\begin{proof} Suppose $f:\omega_1\rightarrow\mathcal{P}(\omega)$ is injective. The proof is as the proof of Proposition 13, except that $P$ is replaced by the downward closure of the union of the orbits of 
$$\{\langle0,0\rangle\}\cup\{\langle n,n+1\rangle:n<\omega\}\cup\{\langle n,\alpha\rangle:\omega\leq\alpha<\gamma, n\in f(\alpha)\}$$ 
for $\gamma<\omega_1$. 
\end{proof} 

Moreover if $Q$ is a good quantifier on $\omega_1$, then a subset of $X_\tau$ is $\omega_1$-Borel and $G$-invariant if and only if it is definable in $\La_{\omega_2\omega_1}(Q)$. 

\begin{prop} 
The orbits of the automorphism group of a clopen quantifier $Q$ on $\kappa$ are definable in $\La_{\kappa\kappa}(Q)$. 
\end{prop}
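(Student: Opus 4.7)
The plan is to transliterate the proof of Proposition 17 to the setting of cardinality $\kappa$. The argument breaks into three steps: (i) show that a clopen quantifier on $\kappa$ has a support of size $<\kappa$; (ii) define the relations $R^{l,m}$ and the support in $\La_{\kappa\kappa}(Q)$; (iii) characterise the orbits through these relations.

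For step (i) I would first generalise Lemma 15 to $\kappa$: for a closed quantifier $Q$ of type $\langle\alpha\rangle$ with $\alpha<\kappa$, the family of supporting subsets of $\kappa^\alpha$ is closed under intersections of decreasing chains of length up to $\kappa$, because $Q$ is closed in the $\kappa$-topology and the relevant limits occur in $Q$. A transfinite recursion of length $\kappa^\alpha=\kappa$ (which equals $\kappa$ by $\kappa^{<\kappa}=\kappa$) then strips coordinates one at a time and produces the unique support. For a clopen $Q$, a neighbourhood-of-a-point argument forces the support to be contained in the domain of a partial function witnessing a basic clopen neighbourhood, hence of size $<\kappa$.

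For step (ii), fix $r=|S|<\kappa$. I would define $R^{l,m}$ for $l,m<\kappa$ exactly as in Lemma 16, with the obvious modification that the conjunctions and disjunctions have length $<\kappa$ and the single quantifier block has length $r$. These formulas then lie in $\La_{\kappa\kappa}(Q)$. The same argument as in Lemma 16 expresses $S$ itself in $\La_{\kappa\kappa}(Q)$ by testing asymmetry of $R^{r+1,r}$ and $R^{r,r+1}$ at each candidate point.

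For step (iii) I would repeat the argument of Proposition 17: a tuple $\bar{a}$ of length $\gamma<\kappa$ lies in the orbit of $\langle\beta:\beta<\gamma\rangle$ if and only if there is an extension of length $\gamma+r$ whose range contains $S$ and such that the induced partial map is injective and preserves $S$ and all $R^{j,l}$ with $j+l<\gamma+r$. This is expressible by an $\La_{\kappa\kappa}(Q)$ formula, since it quantifies existentially over fewer than $\kappa$ new variables and conjoins fewer than $\kappa$ preservation conditions, using $\kappa^{<\kappa}=\kappa$ to bound the number of relevant index sets below $\gamma+r$. The main obstacle will be step (i) — verifying that ``clopen'' in the $\kappa$-topology on $2^{\kappa^\alpha}$ really forces a support of size $<\kappa$, since the finite-union-of-basic-opens argument available in the classical case does not transfer verbatim and must be replaced by an argument exploiting both openness and closedness of $Q$ together with $\kappa^{<\kappa}=\kappa$. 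Once step (i) is in hand, the remaining steps are routine transliterations of the countable proof.
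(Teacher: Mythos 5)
Your steps (ii) and (iii) are exactly the transliteration the paper intends (its own proof is just a pointer back to the countable case), so everything turns on your step (i), and there you have a genuine gap. The ``neighbourhood-of-a-point argument'' only gives local information: for each $R\in 2^{\kappa^{\alpha}}$ it produces a set of fewer than $\kappa$ coordinates whose values at $R$ already decide membership in $Q$ for all points agreeing with $R$ on that set. To turn this into a single support of size $<\kappa$ for $Q$ you would have to extract, from the resulting cover of $2^{\kappa^{\alpha}}$ by decided basic neighbourhoods, a subcover of size $<\kappa$ and take the union of the domains. That extraction is precisely the compactness which makes ``clopen iff finite support'' true for $\kappa=\omega$, and it fails for uncountable $\kappa$ that are not weakly compact; in particular it fails for $\kappa=\omega_{1}$, the case the paper singles out in Section 4. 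In fact the implication ``clopen implies support of size $<\kappa$'' is false there: take a downward-closed subtree $T$ of $2^{<\omega_{1}}$ of height $\omega_{1}$ with no uncountable branch (for instance the tree of initial segments of blockwise codes of bounded strictly increasing sequences of rationals); every $x\in 2^{\omega_{1}}$ leaves $T$ at some least countable level, and if one decides membership of $x$ according to its exit node, arranging that both decisions occur above every node of $T$, one obtains a set in which every point has a deciding basic neighbourhood of size $<\omega_{1}$ --- hence a clopen quantifier --- that is not determined by any set of fewer than $\omega_{1}$ coordinates.

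There is also a secondary problem in your generalisation of the support-existence lemma: in the countable proof the sets $A_{n}=(A\cap S_{n})\cup(B-S_{n})$ converge to $B$ because a basic neighbourhood constrains only finitely many coordinates, each of which stabilises, whereas for uncountable $\kappa$ a basic neighbourhood constrains a set $D$ of size $<\kappa$, and along a decreasing chain of limit length $\lambda<\kappa$ whose cofinality is at most $|D|$ no single member of the chain need lie in the given neighbourhood of $B$; so neither the existence nor the smallness of a support is automatic. As it stands, your argument (and, to be fair, the paper's one-line proof makes the same silent assumption) establishes the proposition only for clopen quantifiers on $\kappa$ that are determined by fewer than $\kappa$ coordinates, or for arbitrary clopen quantifiers when $\kappa$ is weakly compact. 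To close the gap you must either build such a hypothesis into the statement or replace step (i) by a genuinely different argument; the local neighbourhood observation cannot do the job, and you correctly sensed that this is where the difficulty lies.
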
 

\begin{proof} 
As in the proof of Proposition 18. 
\end{proof}

\bibliography{refs}
\bibliographystyle{asl}

\end{document}